\newif\iffull
\newtheorem*{theorem*}{Theorem}
\newtheorem{theorem}{Theorem}
\newtheorem*{lemma*}{Lemma}
\newtheorem{lemma}[theorem]{Lemma}
\newtheorem*{proposition*}{Proposition}
\newtheorem{proposition}[theorem]{Proposition}
\newtheorem*{fact*}{Fact}
\newtheorem*{question*}{Question}
\newtheorem{question}[theorem]{Question}
\newtheorem*{corollary*}{Corollary}
\newtheorem{corollary}[theorem]{Corollary}
\newcounter{claimcounter}[theorem]
\numberwithin{claimcounter}{theorem}
\newtheorem*{claim*}{Claim}
\newtheorem{claim}[claimcounter]{Claim}
\theoremstyle{remark}
\newtheorem*{remark*}{Remark}
\newtheorem{remark}[theorem]{Remark}
\theoremstyle{definition}
\newtheorem*{definition*}{Definition}
\newtheorem*{observation*}{Observation}
\title{Barycentric cuts through a convex body\footnote{The research stay of Z.P. at IST Austria is funded by the project
 Improvement of internationalization (CZ.02.2.69/0.0/0.0/17\_050/0008466) in
the field of research and development at Charles University, through the
support of quality projects MSCA-IF. The work by M.T. is supported by the GA\v{C}R grant 19-04113Y and by the Charles University projects PRIMUS/17/SCI/3 and UNCE/SCI/004.}}
\author[1,2]{Zuzana Pat\'akov\'a}
\author[3]{Martin Tancer}
\author[1]{Uli Wagner}
\affil[1]{\small IST Austria, Klosterneuburg, Austria.}
\affil[2]{\small Computer Science Institute, Charles University, Prague, Czech Republic.}
\affil[3]{\small Department of Applied Mathematics, Charles University, Prague, Czech Republic.}
\date{}
\title{Barycentric cuts through a convex body}
\author{Zuzana Pat\'akov\'a}{Charles University, Prague, Czech republic, \and IST Austria, Klosterneuburg, Austria.}{zuzka@kam.mff.cuni.cz}{https://orcid.org/0000-0002-3975-1683}{The research stay at IST Austria is funded by the project
 Improvement of internationalization (CZ.02.2.69/0.0/0.0/17\_050/0008466) in
the field of research and development at Charles University, through the
support of quality projects MSCA-IF.}
\author{Martin Tancer}{Department of Applied Mathematics, Charles University, Prague, Czech
Republic.}{`my surname'@kam.mff.cuni.cz}{}{Supported by the GA\v{C}R grant 19-04113Y and by the Charles
    University projects PRIMUS/17/SCI/3 and
    UNCE/SCI/004.}
\author{Uli Wagner}{IST Austria, Klosterneuburg, Austria.}{uli@ist.ac.at}{https://orcid.org/0000-0002-1494-0568}{}
\authorrunning{Z. Pat\'akov\'a, M. Tancer, U. Wagner}
\keywords{convex body, barycenter, Tukey depth, smooth manifold, \blue{critical
points}}
\theoremstyle{theorem}
\newtheorem{question}[theorem]{Question}
\newtheorem{postulate}[theorem]{Postulate}
\newtheorem{embeddedclaim}{Claim}[theorem]
\newcommand{\R}{\mathbb{R}}
\newcommand{\RP}{\mathbb{R}P}
\renewcommand{\O}{\mathcal{O}}
\renewcommand{\P}{\mathbf{P}}
\DeclareMathOperator{\dep}{depth}
\DeclareMathOperator{\conv}{conv}
\DeclareMathOperator{\cen}{cen}
\newcommand{\marrow}{\marginpar{\boldmath$\longleftarrow$}}
\newif\ifcmts
\newcommand{\martin}[1]{\ifhmode\newline\fi\marrow
  \textsf{\textcolor{green}{\bf
MARTIN:} #1\newline}}
\newcommand{\zuzka}[1]{\ifhmode\newline\fi\marrow
  \textsf{\textcolor{magenta}{\bf
ZUZKA:} #1\newline}}
\newcommand{\uli}[1]{\ifhmode\newline\fi\marrow
  \textsf{\textcolor{cyan}{\bf
ULI:} #1\newline}}
\newcommand{\martin}[1]{}
\newcommand{\zuzka}[1]{}
\newcommand{\uli}[1]{}
\newcommand{\blue}[1]{\textcolor{blue}{#1}}
\begin{document}

\maketitle

\begin{abstract}
Let $K$ be a convex body in $\R^n$ (i.e., a compact convex set with nonempty
  interior). Given a point $p$ in the interior of $K$, a hyperplane $h$ passing
  through $p$ is called \emph{barycentric} if $p$ is the barycenter of $K \cap
  h$. In 1961, Gr\"{u}nbaum raised the question whether, for every $K$, there
  exists an interior point $p$ through which there are at least $n+1$ distinct
  barycentric hyperplanes. 
  Two years later, this was seemingly resolved
  affirmatively by showing that this is the case if $p=p_0$ is
  the point of maximal \emph{depth} in $K$. However, while working on a related
  question, we noticed that one of the auxiliary claims in 
 the proof
  is incorrect. Here, we provide a counterexample; this re-opens Gr\"unbaum's
  question.

It follows from known results that for $n \geq 2$, there are always at least
  three distinct barycentric cuts through the point $p_0 \in K$ of maximal
  depth. Using 
  tools related to  Morse theory
  we are able to improve this bound: four distinct barycentric cuts through $p_0$ are guaranteed if $n \geq 3$. 
\end{abstract}

\section{Introduction}
\subparagraph*{Gr\"{u}nbaum's questions.} Let $K$ be a convex body in $\R^n$ (i.e., compact convex set with nonempty interior). Given an interior point $p\in K$, a hyperplane $h$ passing through $p$ is called \emph{barycentric} if $p$ is the barycenter (also known as the centroid) of the intersection $K\cap h$. In 1961, Gr\"{u}nbaum~\cite{grunbaum61} raised the following questions (see also \cite[\S 6.1.4]{grunbaum63}):
\begin{question}
\label{q:1}
Does there always exist an interior point $p\in K$ through which there are at least $n+1$ distinct barycentric hyperplanes?  
\end{question}
\begin{question}
\label{q:2}
In particular, is this true if $p$ is the barycenter of $K$?
\end{question}
Seemingly, Question~\ref{q:1} was  answered affirmatively by Gr\"unbaum himself~\cite[\S
6.2]{grunbaum63} two years later, by using a variant of Helly's theorem to show that there are at least $n+1$ barycentric cuts through the point of $K$ of maximal \emph{depth} (we will recall the definition below).
The assertion that Question~\ref{q:1} is resolved has also been reiterated in other geometric literature~\cite[A8]{croft_falconer_guy94}. However, when working on Question~\ref{q:2}, which remains open, we identified a concrete problem in Grünbaum's argument for the affirmative answer for the point of the maximal depth. 
The first aim of this paper is to point out this problem, which re-opens Question~\ref{q:1}. 
\subparagraph*{Depth, depth-realizing hyperplanes, and the point of maximum
depth.} In order to describe the problem with Gr\"unbaum's argument, we need a
few definitions. Let $p$ be a point in $K$. For a unit vector $v$ in the unit
sphere $S^{n-1} \subseteq \R^n$,  let $h_v = h_v^p:=\{x\in \R^n \colon \langle
v,x-p\rangle =0\}$ be the hyperplane orthogonal to $v$ and passing through $p$, and let $H_v = H_v^p:= \{x\in \R^n \colon \langle v,x-p\rangle  \geq 0\}$ be the half-space bounded by $h_v$ in the direction of $v$. Given $p$, we define the \emph{depth function} $\delta^p\colon
S^{n-1} \to [0,1]$ via $\delta^p(v) = \lambda(H_v \cap
K)/\lambda(K)$, where $\lambda$ is the Lebesgue measure ($n$-dimensional volume) in $\R^n$.
The \emph{depth} of a point $p$ in $K$ is defined as 
$\dep(p, K) := \inf_{v \in S^{n-1}} \delta^p(v).$ It is easy to
see\footnote{Given $v, v' \in S^{n-1}$, $\lambda(H_v \cap
K)$ and $\lambda(H_{v'} \cap K)$ differ by at most $\lambda((H_v \Delta H_{v'})
\cap K)$ where $\Delta$ is the symmetric difference. For $\varepsilon > 0$
and $v$ and $v'$ sufficiently close, $\lambda((H_v \Delta H_{v'})
\cap K) < \varepsilon \lambda(K)$~as~$K$~is~bounded.}
that
$\delta^p$ is a continuous function, therefore the infimum in the definition is
attained at some $v \in S^{n-1}$. Any hyperplane $h_v$ through $p$ such that $\dep(p, K) =
\delta^p(v)$ is said to \emph{realize the depth} of $p$. Finally, a \emph{point
of maximal depth in $K$} is a point $p_0$ in the interior of $K$ such that
$\dep(p_0, K) := \max \dep(p, K)$ where the maximum is taken over all points in
the interior of $K$.\footnote{We remark that our depth function slightly differs from the
function $f(H,p)$ used by Gr\"{u}nbaum~\cite[\S 6.2]{grunbaum63}. However, the
point of maximal depth coincides with the `critical point' in~\cite{grunbaum63} and 
hyperplanes realizing the depth for $p_0$ coincide with the `hyperplanes through
the critical point dividing the volume of $K$ in the ratio $F_2(K)$'.}
The point of maximal depth always exists (by compactness of $S^{n-1}$) and
it is unique (two such points would yield a point of larger depth on the segment between them).

\subparagraph{Many depth-realizing hyperplanes?} Gr\"unbaum's argument has two ingredients. The first is the following result, known as Dupin's theorem~\cite{dupin22}, which dates back to 1822:
\begin{theorem}[Dupin's Theorem]
\label{thm:Dupin}
If a hyperplane $h$ through $p$ realizes the depth of $p$ then it is barycentric with respect to $p$.
\end{theorem}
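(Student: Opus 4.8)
The plan is a first-order variational argument. Fix the interior point $p$ and, for a unit vector $v\in S^{n-1}$ such that $h=h_v$ realizes the depth of $p$, observe that $v$ is in particular a critical point of the depth function $\delta^p\colon S^{n-1}\to[0,1]$, since $\delta^p$ attains its minimum there. So it suffices to compute the derivative of $\delta^p$ along a rotation of the cutting hyperplane about a codimension-$2$ axis and to extract the barycentric condition from the vanishing of that derivative.

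Concretely, I would translate so that $p$ is the origin. Given a unit vector $w\perp v$, consider the great-circle family $v_t=(\cos t)\,v+(\sin t)\,w$, so that $h_{v_t}$ is obtained from $h=h_v$ by rotating about the $(n-2)$-dimensional flat $h_v\cap w^{\perp}$. Writing a point near $h_v$ uniquely as $y+sv$ with $y\in h_v$ and $s\in\mathbb{R}$ small, one checks that $x=y+sv$ lies in $H_{v_t}\setminus H_v$ (resp.\ in $H_v\setminus H_{v_t}$) exactly when $s$ lies between $0$ and $t\langle w,y\rangle$ with the appropriate sign; integrating this signed thickness over $y\in K\cap h_v$, and using that $K\cap h_v$ is a full-dimensional convex body in $h_v$ (because $p$ lies in the interior of $K$), gives
\[
\lambda(H_{v_t}\cap K)-\lambda(H_v\cap K)\;=\;t\int_{K\cap h_v}\langle w,y\rangle\,\d\sigma(y)\;+\;o(t)
\]
as $t\to 0$ (from either side), where $\sigma$ denotes $(n-1)$-dimensional Lebesgue measure on $h_v$. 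Hence $t\mapsto\delta^p(v_t)$ is differentiable at $t=0$ with derivative $\tfrac{1}{\lambda(K)}\int_{K\cap h_v}\langle w,y\rangle\,\d\sigma(y)$.

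Since $v$ minimizes $\delta^p$, this derivative vanishes for every unit $w\perp v$, hence, by linearity in $w$, for every $w\perp v$. Writing $b$ for the barycenter of $K\cap h_v$, this says $\langle w,b\rangle=0$ for all $w\perp v$, so $b$ lies on the line $\mathbb{R}v$; but $b$ also lies in $h_v$ (the barycenter of a convex set lies in its affine hull), and $h_v\cap\mathbb{R}v=\{p\}$, so $b=p$. Thus $h$ is barycentric with respect to $p$.

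The only real technical point is the $o(t)$ estimate for the volume of the ``wedge'' $(H_{v_t}\Delta H_v)\cap K$: one must check that, away from a shrinking neighborhood of the relative boundary of $K\cap h_v$, the segment $y+sv$ stays inside $K$ for $|s|$ bounded below, while that boundary neighborhood contributes a negligible amount because its $\sigma$-measure tends to $0$ with $t$. This uniformity is standard for convex bodies; everything else is a routine computation together with the elementary observation that a hyperplane not containing a given line meets it in a single point.
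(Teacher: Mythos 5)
Your proof is correct and follows essentially the same variational argument as the paper, which sketches Dupin's theorem as a perturbation/rotation argument in a footnote and establishes the stronger Proposition~\ref{p:C1} (that $\delta^p$ is $C^1$ with critical points exactly the barycentric directions) by computing the same derivative in a gnomonic-projection chart via dominated convergence. Your great-circle parametrization and one-sided derivative at the global minimum are a leaner packaging of the same computation, and your handling of the $o(t)$ wedge estimate near $\partial(K\cap h_v)$ corresponds to the paper's use of convexity (the boundary has measure zero in the slice) in the dominated-convergence step.
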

Gr\"unbaum refers to Blaschke~\cite{blaschke17} for a proof; for a more recent reference, see~\cite[Lemma 2]{schutt-werner94}.\footnote{The idea of the proof is simple: For contradiction assume that $h$ realizes the depth of
$p$ but that the barycenter $b$ of $K \cap h$ differs from
$p$. Let $v \in S^{n-1}$ be such that $h = h_v$ and $\dep(p, K) = \delta^p(v)$.
Consider the affine $(d-2)$-space $\rho$ in $h$ passing through $p$ and
perpendicular to the segment $bp$. Then by a small rotation of $h$ along $\rho$
we can get $h_{v'}$ such that $\delta^p(v') < \delta^p(v)$ which contradicts
that $h$ realizes the depth of $p$.
Of course, it remains to check the
details.}
A stronger statement will be the content of
Proposition~\ref{p:C1} below.

The second ingredient in Gr\"unbaum's argument is the following assertion (which in~\cite[\S 6.2]{grunbaum63} is deduced using a variant of Helly's theorem, without providing the details).
\begin{postulate}
\label{postulate}
If $p_0$ is the point of $K$ of maxiumal depth, then there are at least $n+1$ distinct hyperplanes through $p_0$ that realize the depth.
\end{postulate}
If correct, Postulate~\ref{postulate}, in combination with Dupin's theorem, would immediately imply an affirmative answer to Question~\ref{q:1}. 
However, it turns out that this step is problematic. Indeed, there is a counterexample to Postulate~\ref{postulate}:

\begin{proposition}
\label{p:counterexample}
Let $K = T \times I \subseteq \R^3$ where $T$ is an 
equilateral triangle and $I$ is a line segment (interval) orthogonal to $T$, and let $p_0 \in K$ be the point of maximal depth  (which in this case coincides with the barycenter of $K$). Then there are only $3$ hyperplanes realizing the depth of $p_0$. 
 
\end{proposition}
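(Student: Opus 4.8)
\emph{Proof plan.}

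\emph{Step 1 (reduce to the barycenter; set up the depth function).} First I would observe that $K = T\times I$ carries the symmetry group $D_3\times\Z_2$ of order $12$. The point of maximal depth is unique, hence fixed by every symmetry, so it must be $p_0 = (\bar T, c)$ with $\bar T$ the centroid of $T$ and $c$ the midpoint of $I$ — i.e.\ the barycenter of $K$. Put $\bar T$ at the origin of $\R^2$, $I = [-1,1]$, so $p_0 = 0$. Writing $v = (v',v_3)\in S^2\subseteq \R^2\times\R$ and integrating the indicator of $H^{p_0}_v\cap K$ over $z$ gives $\delta^{p_0}(0,0,\pm1) = \tfrac12$; $\delta^{p_0}(w,0) = \rho(w) := \lambda(\{y\in T:\langle w,y\rangle\ge 0\})/\lambda(T)$ for $w\in S^1$; and for $v_3>0$, with $\xi := v'/v_3$,
\[
 \delta^{p_0}(v) \;=\; \frac1{2\lambda(T)}\int_T \operatorname{clamp}\!\big(1+\langle\xi,y\rangle,0,2\big)\,\mathrm{d}y \;=\; \frac12 - \frac1{2\lambda(T)}\int_T \operatorname{sat}\!\big(\langle\xi,y\rangle\big)\,\mathrm{d}y,
\]
using $\operatorname{clamp}(a,0,2)=\max(0,\min(a,2))$, $\operatorname{sat}(a)=\operatorname{sign}(a)\max(|a|-1,0)$, the identity $\operatorname{clamp}(1+a,0,2)=a+1-\operatorname{sat}(a)$, and $\int_T\langle\xi,y\rangle\,\mathrm{d}y=0$. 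Since $h_v=h_{-v}$ and $\delta^{p_0}(v)+\delta^{p_0}(-v)=1$, a hyperplane $h_v$ realizes the depth of $p_0$ iff $\min(\delta^{p_0}(v),1-\delta^{p_0}(v))=\dep(p_0,K)$.

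\emph{Step 2 (the three candidates and what is left).} For a line $\ell$ through $\bar T$ parallel to a side of $T$, the vertical hyperplane $h=\ell\times I$ splits $\lambda(K)$ in ratio $4:5$: $\ell$ cuts off a corner triangle similar to $T$ with ratio $2/3$ (the centroid divides each median $2:1$), of relative area $4/9$. Calling these three hyperplanes $h^{(1)},h^{(2)},h^{(3)}$, each has $\{\delta^{p_0}(v),\delta^{p_0}(-v)\}=\{4/9,5/9\}$. I claim $\dep(p_0,K)=4/9$ and that $h^{(1)},h^{(2)},h^{(3)}$ are the only depth-realizing hyperplanes through $p_0$; it suffices to prove (a) $\rho(w)\ge 4/9$ for all $w\in S^1$, with equality exactly when $w\perp$ a side of $T$, and (b) $\delta^{p_0}(v)\in(4/9,5/9)$ for every tilted $v$ ($v_3\ne0$). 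Claim (a) is the classical fact that a line through the centroid of a triangle cuts off a corner triangle of relative area $ts$ with $\tfrac1t+\tfrac1s=3$, minimized uniquely at $t=s=2/3$ (the line parallel to the opposite side) — this gives the bound and equality case, and in particular that among vertical hyperplanes only the $h^{(i)}$ realize the depth. Granting (a) and (b), $\inf_v\delta^{p_0}(v)=4/9$ (attained on the $h^{(i)}$), so $\dep(p_0,K)=4/9$ and no tilted hyperplane realizes it.

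\emph{Step 3 (reduce (b) to a cap inequality).} By $\delta^{p_0}(v',v_3)=\delta^{p_0}(v',-v_3)$ and the oddness of $\operatorname{sat}$, (b) reduces to $\int_T\operatorname{sat}(\langle\xi,y\rangle)\,\mathrm{d}y < \tfrac19\lambda(T)$ for all $\xi\ne0$. Write $\xi=sw$ ($s>0$, $w\in S^1$) and, for $t\ge0$, let $a(t;w):=\lambda(\{y\in T:\langle w,y\rangle>t\})$, $b(t;w):=\lambda(\{y\in T:\langle w,y\rangle<-t\})$. Using $\operatorname{sat}(sa)=s\operatorname{sign}(a)(|a|-1/s)_+$, the layer-cake formula, Fubini, and $\int_0^\infty(a(t;w)-b(t;w))\,\mathrm{d}t = \int_T\langle w,y\rangle\,\mathrm{d}y = 0$, one gets
\[
 \int_T\operatorname{sat}(s\langle w,y\rangle)\,\mathrm{d}y \;=\; s\int_{1/s}^{\infty}\!\big(a(t;w)-b(t;w)\big)\,\mathrm{d}t \;=\; \frac1\lambda\int_0^{\lambda}\!\big(b(t;w)-a(t;w)\big)\,\mathrm{d}t, \qquad \lambda:=1/s.
\]
Hence it is enough to prove: $(\star)$ $b(t;w)-a(t;w)\le\tfrac19\lambda(T)$ for all $w\in S^1$, $t\ge0$, and, for each fixed $w$, equality holds only on a measure-zero set of $t$. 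Then the average $\tfrac1\lambda\int_0^\lambda(b-a)\,\mathrm{d}t$ is $<\tfrac19\lambda(T)$ for every $\lambda>0$, and it $\to0$ as $\lambda\to\infty$, giving what (b) needs.

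\emph{Step 4 (prove $(\star)$ — the crux).} Geometrically $(\star)$ compares, for two lines perpendicular to $w$ equidistant from $\bar T$, the areas of $T$ beyond them. By $D_3$-symmetry I would restrict $w$ to one $60^\circ$ arc — say where $V_1$ attains $\max_i\langle w,V_i\rangle$ and $V_3$ the minimum. There $a(t;w),b(t;w)$ are explicit piecewise-quadratic functions of $t$ (a corner triangle, a trapezoid, or empty, according to the position of $\pm t$ among the three vertex values $\langle w,V_i\rangle$, which sum to $0$), so $(\star)$ becomes a short list of quadratic inequalities in $t$ to be verified uniformly over the arc, together with a determination of the finitely many equality configurations (e.g.\ $t=0$ with $w\perp$ a side, or $t=\operatorname{dist}(\bar T,\text{side }i)$ with $w$ the normal of side $i$). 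This case analysis is the main obstacle: individually the cases are routine, but they must be organized carefully and the equality set tracked, since it is precisely the \emph{strict} inequality that forces \emph{exactly} three barycentric cuts. Assembling Steps 1--4 shows $h^{(1)},h^{(2)},h^{(3)}$ are the only depth-realizing hyperplanes through $p_0$, which (with Dupin's theorem) is the assertion of the proposition.
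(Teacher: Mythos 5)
Your proposal is structurally sound as far as it goes, and it takes a genuinely different, more computational route than the paper — but Step~4, which you yourself flag as ``the crux,'' is left as a sketch, and that is a real gap: the whole argument for tilted hyperplanes ultimately rests on the cap inequality $(\star)$ (that $b(t;w)-a(t;w)\le\tfrac19\lambda(T)$ for all $w,t$, with equality only on a $t$-null set), and you never verify it. The reduction in Steps~1--3 is correct (I checked the clamp/sat identity, the Fubini/layer-cake step, the use of $\int_T\langle w,y\rangle\,\mathrm{d}y=0$, and the symmetry $\delta^{p_0}(v',v_3)=\delta^{p_0}(v',-v_3)$), but $(\star)$ is a two-parameter family of piecewise-quadratic inequalities with several regimes according to how $\pm t$ interleaves with the three values $\langle w,V_i\rangle$, and the equality locus (e.g.\ $t=0$ with $w$ toward a vertex, and $t=\mathrm{dist}(\bar T,\text{side})$ with $w$ away from a vertex) must be nailed down precisely to get \emph{strict} inequality after averaging. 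Until that is written out, the proof is not complete.

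The paper avoids this case analysis entirely by using Dupin's theorem as a tool rather than only as a corollary. It splits hyperplanes through $p_0$ into three classes: (i) vertical, (ii) tilted but meeting all three vertical edges of $K=T\times I$ inside $K$, (iii) tilted and exiting $K$ on some vertical edge. Case (i) reduces, as in your step, to the classical $4/9$ fact for the triangle. Case (ii) is a one-liner: such a slice splits $K$ exactly in half, so $\delta^{p_0}=1/2>4/9$ — in your framework this is exactly the regime where $\mathrm{sat}(\langle\xi,y\rangle)\equiv 0$ on $T$. The hard case (iii) is disposed of \emph{without computing $\delta^{p_0}$ at all}: by Dupin (depth-realizing $\Rightarrow$ barycentric), it suffices to show the slice is not barycentric, i.e.\ $\cen(K\cap h)\ne p_0$. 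Projecting $K\cap h$ to $T$ gives $U=T\cap S$ for a strip $S$ whose center line passes through $\cen T$, and one shows $\cen U\ne\cen T$ by an elementary decomposition argument (formula~\eqref{e:sum_centres}) in three easy subcases according to how many vertices of $T$ lie in $U$. So where your plan buys uniformity (a single analytic inequality covering all tilts) at the price of a careful piecewise-quadratic verification, the paper's route buys brevity by offloading the tilted case onto Dupin's theorem plus centroid geometry. If you want to complete your approach, you should carry out the $(\star)$ case analysis over a fundamental $60^\circ$ arc, distinguishing whether the lines at heights $\pm t$ cut off a corner triangle or a quadrilateral on each side, and explicitly identify the equality configurations; alternatively, you could short-circuit Step~4 by adopting the paper's observation that for slices that exit $K$ it is enough to check non-barycentricity.
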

\begin{remark} We believe that Proposition~\ref{p:counterexample} can be
  generalized to higher dimensions in the sense that, for every $n$, there are
  only $n$ depth-realizing hyperplanes through the point of maximal depth in
  $\Delta \times I \subseteq \R^n$, where $\Delta$ is a regular
  $(n-1)$-simplex. However, we did not attempt to work out the details
  carefully, because Kyn\v{c}l and Valtr~\cite{kyncl-valtr19} informed us about
  stronger counterexamples: For every $n$, there exists a convex body $K \in \R^n$ such that there are only $3$ depth-realizing hyperplanes through the point of maximal depth in $K$. Therefore, we prefer to keep the proof of Proposition~\ref{p:counterexample} as
simple as possible and focus on dimension $3$.
\end{remark}


\begin{remark} We emphasize that Proposition~\ref{p:counterexample} does not preclude an affirmative answer to Gr\"unbaum's Question~\ref{q:1} (nor to Question~\ref{q:2}), since $T \times I$ contains infinitely many distinct barycentric hyperplanes through $p_0$.
Thus Gr\"{u}nbaum's questions remain open. 
\end{remark}

We also remark that a weakening of Postulate~\ref{postulate} is known to be true (see the `Inverse Ray Basis Theorem~\cite{rousseeuw-ruts99}, using the proof from~\cite{donoho-gasko92}):\footnote{We remark that the second condition in the statement of the result in~\cite{rousseeuw-ruts99} is equivalent to the statement that $0 \in \conv U$, in our notation.}$^{,}$\footnote{Sketch of the inverse ray basis theorem:
if there is a closed hemisphere $C \subseteq S^{n-1}$ which does not contain
a point of $U$, let $v$ be the centre of $C$. Then a small shift of $p_0$ in
the direction of $v$ yields a point of larger depth, a contradiction.}

\begin{proposition}
\label{p:irbt}
Let $U \subseteq
S^{n-1}$ be the set of vectors $u$ such that $\delta^{p_0}(u) = \dep(p_0, K)$. Then $0 \in \conv U$.
\end{proposition}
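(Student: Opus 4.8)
I would prove Proposition~\ref{p:irbt} by contradiction: assuming $0\notin\conv U$, I will exhibit a point of $K$ of strictly larger depth than $p_0$, contradicting the defining property of $p_0$. Since $\delta^{p_0}$ is continuous, $U=(\delta^{p_0})^{-1}(\dep(p_0,K))$ is closed, hence a compact subset of $S^{n-1}$, so $\conv U$ is compact and convex. If $0\notin\conv U$, separating the point $0$ from the disjoint compact convex set $\conv U$ gives a unit vector $v\in S^{n-1}$ and a constant $\epsilon>0$ with $\langle v,u\rangle\ge\epsilon$ for every $u\in U$. The claim is then that the point $p_t:=p_0-tv$ satisfies $\dep(p_t,K)>\dep(p_0,K)$ for all sufficiently small $t>0$.

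Set $d_0:=\dep(p_0,K)$. The elementary but crucial observation is that $H_u^{p_t}=\{x:\langle u,x-p_0\rangle\ge -t\langle u,v\rangle\}$ for every unit vector $u$; that is, moving the base point from $p_0$ to $p_t$ translates the bounding hyperplane $h_u^{p_0}$ by the signed amount $-t\langle u,v\rangle$ in the direction $u$. I would split $S^{n-1}=F\cup G$ with $F:=\{u:\langle v,u\rangle\le\epsilon/2\}$ (directions ``far from $U$'') and $G:=\{u:\langle v,u\rangle\ge\epsilon/2\}$ (directions ``close to $U$''), and bound $\delta^{p_t}$ from below on each. On $F$: the set $F$ is compact and disjoint from $U$, so $\delta^{p_0}$ attains on $F$ a minimum $d_1>d_0$; using the joint continuity of $(u,t)\mapsto\delta^{p_0-tv}(u)$ — proved exactly as the continuity of $\delta^p$ in the excerpt, by bounding the volume of symmetric differences of half-spaces intersected with the bounded set $K$ — together with compactness of $F$, there is $t_1>0$ with $\delta^{p_t}(u)>(d_0+d_1)/2>d_0$ for all $u\in F$ and $0<t\le t_1$. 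On $G$: here $\langle v,u\rangle\ge\epsilon/2>0$, hence $H_u^{p_t}\supseteq H_u^{p_0}$, and $H_u^{p_t}\setminus H_u^{p_0}$ is the slab $\{x:-t\langle u,v\rangle\le\langle u,x-p_0\rangle<0\}$ of Euclidean width $t\langle u,v\rangle$ lying against $h_u^{p_0}$. Fixing a ball $B(p_0,r)\subseteq K$ (possible since $p_0\in\intr K$), for $t\le r/2$ this slab contains a piece of $B(p_0,r)$ of volume at least $c_n r^{n-1}t\langle v,u\rangle\ge c_n r^{n-1}(\epsilon/2)\,t$ for a dimensional constant $c_n>0$; hence $\delta^{p_t}(u)\ge\delta^{p_0}(u)+c_n r^{n-1}(\epsilon/2)\,t/\lambda(K)\ge d_0+c_n r^{n-1}(\epsilon/2)\,t/\lambda(K)>d_0$ for all $u\in G$.

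Combining the two bounds, and shrinking $t$ further if necessary so that also $p_t\in\intr K$, I obtain $\delta^{p_t}(u)>d_0$ for every $u\in S^{n-1}$. Since $\delta^{p_t}$ is continuous on the compact sphere, its infimum — namely $\dep(p_t,K)$ — is attained and therefore also strictly exceeds $d_0=\dep(p_0,K)$, contradicting the maximality of the depth at $p_0$. Hence $0\in\conv U$.

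There is no deep obstacle here (the paper sketches the idea in a footnote); the only points that require care are the uniformity in $u$ in the two estimates. On $F$ one needs joint continuity together with compactness to make ``$\delta^{p_t}$ is close to $\delta^{p_0}$'' uniform, and on $G$ one needs the lower bound on the swept volume to be uniform, which is exactly what the inscribed ball $B(p_0,r)$ and the separation constant $\epsilon$ supply. An alternative to the second estimate is the first-variation identity: the right derivative at $t=0$ of $\delta^{p_0-tv}(u)$ equals $\langle v,u\rangle\cdot\mathrm{vol}_{n-1}(K\cap h_u^{p_0})/\lambda(K)$, which is bounded below by a positive constant on $U$; but converting this into a statement about $\inf_{u\in S^{n-1}}\delta^{p_t}(u)$ still requires the same compactness bookkeeping, so the direct slab estimate seems cleanest.
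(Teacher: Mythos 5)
Your proof is correct, and it takes the same route as the footnote sketch the paper gives for the Inverse Ray Basis Theorem: separate $0$ from $\conv U$ to get a direction (the center of a closed hemisphere missing $U$), shift $p_0$ slightly in that direction, and show the depth strictly increases — contradicting maximality. The paper only sketches this and cites \cite{rousseeuw-ruts99,donoho-gasko92}; you have carefully supplied the uniform estimates (the $F$/$G$ split, joint continuity on the compact set $F$, and the inscribed-ball slab bound on $G$) needed to make the shift argument rigorous.
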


In the special case that $U$ is in general position, the cardinality of $U$ is
at least $n+1$ (otherwise $\dim \conv U < n$ and $\conv U$ would not contain
the origin, by general position), which proves Postulate~\ref{postulate} in
this special case. However, $U$ need not be always in general position. For
example, in the case $K = T \times I$ in $\R^3 = \R^2 \times \R$ of
Proposition~\ref{p:counterexample}, the set $U$ contains three vectors in the
plane through the origin parallel with $T$. This is also the way we arrived at
the counterexample from Proposition~\ref{p:counterexample}.



Inverse Ray Basis Theorem immediately implies that three barycentric
hyperplanes are guaranteed in dimension at least $2$.

\begin{corollary}
  \label{c:three}
Let $K$ be a convex body in $\R^n$ where $n \geq 2$ and $p_0$ be the point of
  maximal depth of $K$. Then there at least three distinct barycentric
  hyperplanes through $p_0$.
%
\end{corollary}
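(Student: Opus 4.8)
The plan is to lower-bound the number of barycentric hyperplanes through $p_0$ by the number of hyperplanes through $p_0$ that \emph{realize} the depth --- which is legitimate by Dupin's Theorem (Theorem~\ref{thm:Dupin}) --- and then to read off this number from the constraint $0 \in \conv U$ supplied by the Inverse Ray Basis Theorem (Proposition~\ref{p:irbt}), where $U \subseteq S^{n-1}$ is the set of depth-realizing directions at $p_0$ (nonempty, since the infimum defining $\dep(p_0,K)$ is attained).

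I would start with two elementary observations about the depth function. First, $\delta^{p_0}(v) + \delta^{p_0}(-v) = 1$ for every $v \in S^{n-1}$, since $H_v \cup H_{-v} = \R^n$ while $H_v \cap H_{-v} = h_v$ has Lebesgue measure zero; consequently $d := \dep(p_0, K) \le \tfrac12$, because $d \le \min\{\delta^{p_0}(v),\, 1 - \delta^{p_0}(v)\}$. Second, two hyperplanes through $p_0$ with unit normals $v, v'$ coincide exactly when $v' = \pm v$, so the map $v \mapsto h_v$ is injective on any subset of $S^{n-1}$ that contains no antipodal pair.

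Then I would split into two cases. If $d = \tfrac12$, the first observation forces $\delta^{p_0}(v) = \tfrac12 = d$ for every $v$, so $U = S^{n-1}$; by Dupin's Theorem every hyperplane through $p_0$ is then barycentric, and for $n \ge 2$ there are infinitely many of these (they are parametrized by $\RP^{n-1}$), in particular at least three. If $d < \tfrac12$, then for $v \in U$ we get $\delta^{p_0}(-v) = 1 - d > d$, hence $-v \notin U$; thus $U$ contains no antipodal pair, $v \mapsto h_v$ is injective on $U$, and each $h_v$ with $v \in U$ is barycentric by Dupin. It remains to check $|U| \ge 3$: since $0 \in \conv U$ and $U \subseteq S^{n-1}$, $U$ cannot be a single point, and if $U = \{u_1, u_2\}$ then $0$ lies on the segment $[u_1, u_2]$, which for unit vectors forces $u_2 = -u_1$, contradicting the absence of antipodal pairs. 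Hence $|U| \ge 3$, which finishes this case.

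I do not expect a genuine obstacle. The only subtle point is the degenerate case $d = \tfrac12$ --- equivalently, a body $K$ every central hyperplane of which bisects its volume, as happens when $K$ is centrally symmetric about $p_0$ --- where the injectivity argument fails but is unnecessary because $U$ is the whole sphere. The hypothesis $n \ge 2$ enters only to make $\RP^{n-1}$ infinite in the first case, and it is of course already implicit in Proposition~\ref{p:irbt}.
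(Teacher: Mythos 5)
Your proposal is correct and uses the same two inputs as the paper's proof (Dupin's Theorem and the Inverse Ray Basis Theorem), together with the observation $\delta^{p_0}(v)+\delta^{p_0}(-v)=1$. The only presentational difference is that you split cleanly into the cases $d=\tfrac12$ and $d<\tfrac12$ and explicitly track when $v\mapsto h_v$ is injective, whereas the paper's proof shows $|U|\geq 3$ by ruling out $|U|=2$ and leaves the (easy) step from ``three directions in $U$'' to ``three distinct hyperplanes'' implicit; your version is slightly more careful on that point but the underlying argument is the same.
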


\begin{proof}
  Let $U$ be the set from Proposition~\ref{p:irbt}.  Then, $0 \in \conv U$ and $U \subseteq S^{n-1}$ imply
together $|U| \geq 2$. However, if $|U| = 2$, then $U = \{u,-u\}$ for some $u \in
S^{n-1}$. This necessarily means $\dep(p_0, K) = \delta^{p_0}(u) =
\delta^{p_0}(-u) = 1/2$ as $\delta^{p_0}(u) + \delta^{p_0}(-u) = 1$. Then for any other $v \in S^{n-1}$ we get
$\min\{\delta^{p_0}(v), \delta^{p_0}(-v)\} \geq 1/2$ which implies $\delta^{p_0}(v) =
\delta^{p_0}(-v) = 1/2$ as well. Therefore $v \in U$ contradicting $|U| = 2$.)
\end{proof}

\subparagraph{Four barycentric cuts via critical points of $C^1$
functions.} Using tools related to 
 Morse theory, we are able to obtain one more
barycentric hyperplane, provided that $n \geq 3$.

\begin{theorem}
\label{t:four}
Let $K$ be a convex body in $\R^n$ where $n \geq 3$ and $p_0$ be the point of
  maximal depth of $K$. Then there are at least four distinct hyperplanes $h$ such that $p_0$ is the barycenter of $K \cap h$. 
\end{theorem}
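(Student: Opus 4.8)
The plan is to study the depth function $\delta^{p_0}$ on the sphere $S^{n-1}$ and count its critical points, exploiting the fact that $\delta^{p_0}$ is sufficiently smooth near any direction realizing the depth (this smoothness, and the barycentricity it forces, being the content of Proposition~\ref{p:C1}, which we are allowed to assume). First I would set $\mu := \dep(p_0, K)$ and consider the sublevel behaviour of $\delta := \delta^{p_0}$. The key structural observation is that $\delta(v) + \delta(-v) = 1$ for all $v$, so $\delta$ has no critical point strictly between the values $\mu$ and $1-\mu$ unless $\mu = 1/2$ (and the case $\mu=1/2$ can be handled separately, since then $\delta \equiv 1/2$ by the argument in the proof of Corollary~\ref{c:three}, giving infinitely many barycentric cuts). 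Moreover, $\delta$ attains its global minimum exactly on the set $U$ of Proposition~\ref{p:irbt}, and by the antipodal relation it attains its global maximum $1-\mu$ exactly on $-U$. Every global minimum and every global maximum of $\delta$ is a critical point, and by Proposition~\ref{p:C1} each such critical direction $v$ yields a barycentric hyperplane $h_v$; antipodal directions $v, -v$ give the same hyperplane, so $U$ contributes $|U|$ distinct barycentric cuts (after identifying antipodes, but $U$ and $-U$ are disjoint since $\mu \ne 1-\mu$, so in fact we should count $U$ alone, whose image in $\mathbb{R}P^{n-1}$ has size $|U|$ provided no two elements of $U$ are antipodal — which holds because $\mu<1/2$).

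Next I would bring in a Lusternik–Schnirelmann / Morse-theoretic lower bound. The cleanest route: pass to the quotient $\mathbb{R}P^{n-1}$, on which barycentric hyperplanes correspond to points, and count critical points of the induced function there, or alternatively argue directly on $S^{n-1}$ using the $\mathbb{Z}/2$-symmetry $\delta(v) = 1-\delta(-v)$. Suppose for contradiction there were at most three barycentric hyperplanes through $p_0$. By Dupin's theorem and Proposition~\ref{p:C1}, the critical points of $\delta$ are confined to finitely many antipodal pairs of directions, at most three pairs; in particular $U$ consists of directions lying in at most three antipodal pairs, hence $|U| \le 6$ and the minimum is attained at $\le 6$ points, with $0 \in \conv U$. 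The heart of the argument is to show that with only three critical hyperplanes the topology of the pair $(S^{n-1}, \{\delta \le \mu\})$ — equivalently of $\mathbb{R}P^{n-1}$ — cannot be realized when $n \ge 3$. Here one uses that $\mathbb{R}P^{n-1}$ for $n \ge 3$ has $\mathbb{Z}/2$-cup-length $n-1 \ge 2$, so any reasonable smooth function on it has at least $n \ge 3$ critical points; combined with the antipodal symmetry forcing paired structure upstairs and with the constraint $0 \in \conv U$ (which rules out the degenerate configurations), one squeezes out a fourth hyperplane. Concretely, three is ruled out by showing that a function on $S^{n-1}$ with exactly the critical set $\{v_1,-v_1,v_2,-v_2,v_3,-v_3\}$ and $\delta(v_i) = \mu$, $\delta(-v_i) = 1-\mu$, $0 \in \conv\{v_1,v_2,v_3\}$ would force $v_1,v_2,v_3$ to be linearly dependent and coplanar through $0$, and then a handle-attachment / Euler-characteristic count on $S^{n-1}$ (which requires $n-1 \ge 2$, i.e.\ $n \ge 3$) yields a contradiction.

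I expect the main obstacle to be the regularity bookkeeping rather than the topology: one must verify that $\delta$ is genuinely $C^1$ (or at least admits a well-behaved gradient flow) in a neighbourhood of its extrema so that the Morse/Lusternik–Schnirelmann machinery applies, and one must ensure the critical set is controlled well enough that "at most three barycentric hyperplanes" really does imply "finitely many critical points forming at most three antipodal pairs" — a priori the critical set could be positive-dimensional, as the $T \times I$ example warns, with a whole circle of critical directions collapsing to finitely many hyperplanes. The resolution is that by Proposition~\ref{p:C1} every critical direction gives a barycentric hyperplane, so finitely many barycentric hyperplanes do force the critical set into finitely many fibres, but those fibres may be arcs, and one then needs a deformation-retraction argument (or a perturbation within the symmetry class) to reduce to isolated critical points before invoking the cup-length bound. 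Finally one combines: the cup-length of $\mathbb{R}P^{n-1}$ being $n-1 \ge 2$ gives at least $3$ critical points downstairs in general, and the extra input that $0 \in \conv U$ forbids the lone surviving $3$-point configuration when $n \ge 3$, yielding the fourth barycentric cut.
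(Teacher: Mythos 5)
Your high-level plan coincides with the paper's: study the critical points of $\delta := \delta^{p_0}$ on $S^{n-1}$, use Proposition~\ref{p:C1} to translate critical points into barycentric hyperplanes, use Proposition~\ref{p:irbt} and Corollary~\ref{c:three} to obtain $|U|\ge 3$ global minima (hence, by $\delta(v)=1-\delta(-v)$, also $\ge 3$ global maxima), and then produce one more critical point. The gap is precisely in that last step, which is where the paper does its real work.

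First, a side error: the assertion that ``$\delta$ has no critical point strictly between the values $\mu$ and $1-\mu$ unless $\mu=1/2$'' does not follow from the antipodal relation and is false in general --- the whole point is to \emph{find} such a critical point, and in examples (e.g.\ the bipyramid discussed in Section~\ref{s:conclusions}) the extra barycentric hyperplane corresponds exactly to a saddle with critical value $1/2\in(\mu,1-\mu)$.

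More seriously, the route via Lusternik--Schnirelmann/Morse theory on $\RP^{n-1}$ does not go through as stated. The function $\delta$ does \emph{not} descend to $\RP^{n-1}$, because $\delta(v)=1-\delta(-v)\ne\delta(-v)$; only its critical \emph{set} is antipodally symmetric. Any antipodally invariant modification (such as $(\delta-\tfrac12)^2$) introduces spurious critical behaviour along the entire set $\{\delta=\tfrac12\}$, so the LS cup-length bound on $\RP^{n-1}$ does not directly apply. Arguing instead on $S^{n-1}$, the handle-attachment/Euler-characteristic route needs Morse (or at least $C^2$) regularity, whereas Proposition~\ref{p:C1} only yields $C^1$; a $C^1$ function need not admit a gradient flow with uniqueness, and the usual handle-decomposition theorems do not apply. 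Moreover, the Euler characteristic alone is not enough: three index-$0$ and three index-$(n-1)$ critical points give alternating sum $3+3(-1)^{n-1}$, which equals $\chi(S^{n-1})$ whenever $n$ is even --- one would need the strong Morse inequalities, again requiring genericity you have not established. The concern you raise about positive-dimensional critical sets and the claimed reduction to isolated critical points is also left as an unproved assertion; it is exactly this kind of pathology that the paper works hard to circumvent.

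The paper instead proves a clean self-contained statement, Proposition~\ref{p:one_more_point}: any $C^1$ function on $S^n$ ($n\ge 2$) with at least three local extrema has an additional critical point. Its proof (Section~\ref{s:one_more_point}) is a mountain-pass argument adapted to $C^1$ regularity: one takes the supremum over paths $\gamma$ joining two local maxima of $\min_t f(\gamma(t))$, and shows that if the gradient were uniformly bounded away from $0$ on a suitable compact set avoiding the extrema, one could strictly improve the path (using Lemma~\ref{l:larger_f} and a local lifting via the implicit function theorem, Proposition~\ref{p:path_connected}), contradicting the definition of the supremum. This avoids both the quotient issue and the need for $C^2$/Morse regularity. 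Your proposal correctly identifies the two obstacles (regularity and possibly non-isolated critical sets) but does not overcome them; the cup-length/Euler-characteristic plan as sketched would need substantial additional work and, in its present form, does not yield the theorem.
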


Here we should also mention related work of Blagojevi\'{c} and
Karasev~\cite[Theorem~3.3]{karasev11} and
\cite[Theorem~1.13]{blagojevic-karasev16}. They show that there are at least
$\mu(n)$ barycentric hyperplanes passing through \emph{some} interior point of
$K$ (not necessarily the point of maximal depth), where $\mu(n):=\min_f
\max_{p\in S^n} |f^{-1}(p)|$ is the minimum \emph{multiplicity} of any
continuous map $f\colon \RP^n\to S^n$ (here, $\R P^n$ is the $n$-dimensional
real projective space).  By calculations with Stiefel--Whitney classes, they
obtain lower bounds for $\mu(n)$ that depend in a subtle (and non-monotone) way
on $n$ (see \cite[Remark~1.3]{karasev11}). For example, $\mu(n)\geq
\frac{n}{2}+1$ if $n=2^\ell -2$, but for values of $n$ of the form $n=2^\ell-1$
(e.g., for $n=3$) their methods only give a lower bound of $\mu(n)\geq 2$.

Our argument in the proof of Theorem~\ref{t:four} is, in certain sense,
tight. For completeness we discuss this in Section~\ref{s:conclusions}.

In what follows, 
we view 
$S^{n-1}$ as a smooth manifold with its 
standard differential structure. A key tool in the proof of Theorem~\ref{t:four} is the following close connection between barycentric hyperplanes and the critical points of the depth function:

\begin{proposition}
\label{p:C1}
  Let $K \subseteq \R^n$ be a convex body and $p$ be a point in the interior of
  $K$. Then the corresponding depth function $\delta^p \colon S^{n-1} \to \R$
  is a $C^1$ function. In addition, $v \in S^{n-1}$ is a critical point of
  $\delta^p$ (that is, $D\delta^p(v) = 0$, where $Df(v)$ denotes the total derivative of a function $f$
  at $v$) if and only if
  $h_v$ is barycentric. 
\end{proposition}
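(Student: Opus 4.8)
The plan is to compute the depth function $\delta^p$ explicitly enough to read off both its $C^1$-ness and the critical point condition. Fix the interior point $p$; after translating we may assume $p = 0$. For $v \in S^{n-1}$, write $V(v) := \lambda(H_v \cap K)$, so that $\delta^p(v) = V(v)/\lambda(K)$, and it suffices to analyze $V$. The key is to differentiate $V$ in the direction of a tangent vector $w \in T_v S^{n-1}$ (i.e.\ $w \perp v$). Moving $v$ to $v_t := (v + tw)/\|v + tw\|$ rotates the hyperplane $h_v$ about the axis $h_v \cap h_w$ (the codimension-$2$ subspace of vectors orthogonal to both $v$ and $w$). The region swept out between $H_{v_t} \cap K$ and $H_v \cap K$ is, to first order in $t$, a thin wedge along $h_v \cap K$, and by an elementary ``onion-peeling'' / Cavalieri computation in the two-dimensional family of planes perpendicular to the rotation axis, its signed volume is $t \int_{h_v \cap K} \langle w, x\rangle \, d\lambda_{n-1}(x) + o(t)$. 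Here I would be careful to justify that the error is genuinely $o(t)$ uniformly, using that $K$ is bounded and that $0 \in \interior K$ (so $h_v \cap K$ has positive $(n-1)$-volume and the slice depends continuously on $v$).

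From this directional-derivative formula I would extract the total derivative. Define $F(v) := \int_{h_v \cap K} \langle \,\cdot\,, x\rangle\, d\lambda_{n-1}(x)$ as a linear functional on $\R^n$ restricted to $T_v S^{n-1}$; concretely, if $b(v)$ denotes the barycenter of $K \cap h_v$ and $A(v) := \lambda_{n-1}(K \cap h_v)$, then for $w \perp v$ one has $DV(v)[w] = A(v)\,\langle b(v), w\rangle$. To get $C^1$ regularity it then suffices to show that $v \mapsto A(v)$ and $v \mapsto A(v) b(v)$ (equivalently the moment $\int_{K \cap h_v} x\, d\lambda_{n-1}$) are continuous functions of $v$; this again follows from boundedness of $K$, the fact that slices vary continuously in the Hausdorff sense, and dominated convergence. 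Continuity of the derivative map $v \mapsto (w \mapsto A(v)\langle b(v), w\rangle)$ is then immediate, so $\delta^p \in C^1(S^{n-1})$.

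Finally, the critical point characterization drops out: $v$ is critical iff $DV(v)[w] = 0$ for all $w \perp v$, i.e.\ iff $A(v)\langle b(v), w\rangle = 0$ for all $w \perp v$. Since $A(v) > 0$ (as $0 \in \interior K$), this says $b(v)$ is orthogonal to every vector orthogonal to $v$, i.e.\ $b(v) \parallel v$. But $b(v) \in h_v$, which means $\langle b(v), v\rangle = 0$; combined with $b(v) \parallel v$ this forces $b(v) = 0 = p$. Thus $v$ is critical iff the barycenter of $K \cap h_v$ is $p$, i.e.\ iff $h_v$ is barycentric, which is exactly the claim.

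I expect the main obstacle to be the first step: making the wedge-volume estimate rigorous, in particular proving that the first-order term is exactly the moment integral over the slice $h_v \cap K$ with a uniform $o(t)$ error. The cleanest route is probably to slice $\R^n$ by the pencil of planes containing the rotation axis, reduce to the planar statement that rotating a line through an interior point of a convex region changes the cut-off area at first order by the first moment of the chord, and then integrate this over the axis using Fubini; convexity of $K$ guarantees the chords behave well and gives the needed uniform bounds. Everything after that is routine measure theory.
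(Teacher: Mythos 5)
Your proposal is correct, and it arrives at the same derivative formula and the same endgame as the paper, but by a slightly different computational route. The paper (following Hassairi--Regaieg) first passes to the chart $\beta(u) = -\hat{u}/u_n$ on a hemisphere of $S^{n-1}$, under which hyperplanes through $p=0$ become graphs $x_n = \langle y, \hat{x}\rangle$ parametrized by $y \in \R^{n-1}$, and then the cut-off volume $f(y)$ is an iterated Lebesgue integral whose partial derivatives $\partial_{y_k} f(y) = -\int_{\R^{n-1}} x_k \,\chi_K(\hat{x};\langle y,\hat{x}\rangle)\,d\hat{x}$ fall out by a clean two-step application of dominated convergence (one application for the pointwise limit of the difference quotient, a second for continuity in $y$); convexity of $K$ enters to guarantee that $(\hat{x};\langle y,\hat{x}\rangle)\in\partial K$ only on a null set of $\hat{x}$. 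Your version works intrinsically on the sphere: you rotate $v$ towards a tangent direction $w$ and compute the wedge volume directly, getting $DV(v)[w] = \int_{h_v\cap K}\langle w, x\rangle\,d\lambda_{n-1}(x) = A(v)\langle b(v), w\rangle$. That is the same integral in disguise, and making your $o(t)$ estimate rigorous (Fubini over the pencil of $2$-planes through the rotation axis, then a planar estimate) is essentially equivalent to the paper's dominated-convergence step; the paper's chart has the advantage that this bookkeeping is automatic. Your closing observation that $Df(v)=0$ forces $b(v)\parallel v$, which combined with $b(v)\in h_v$ (so $b(v)\perp v$) gives $b(v)=p$, is a slightly more geometric phrasing of the same conclusion the paper reaches from the vanishing of the moment integral. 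One small caution: your claim that ``slices vary continuously in the Hausdorff sense'' should be justified using $p\in\interior K$ and convexity (otherwise slice areas can jump), exactly the place where the paper invokes convexity to ensure $\partial K\cap h_v$ has $(n-1)$-measure zero; as the paper's Remark and Figure~\ref{f:two_squares} point out, the $C^1$ conclusion genuinely fails without this.
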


As mentioned earlier, Proposition~\ref{p:C1} generalizes Dupin's
theorem.
Indeed, if $h = h_v$
realizes the depth, then $v$ is a global minimum of $\delta^p$, hence $h$ is barycentric by Proposition~\ref{p:C1}. 

In the proof, we closely follow computations by Hassairi and
Regaieg~\cite{hassairi-regaieg08} who stated an extension of Dupin's
theorem to absolutely continuous probability measures.
As explained in~\cite{nagy-schutt-werner19} (see Proposition~29, Example~7, and
the surrounding text in~\cite{nagy-schutt-werner19}), the extension of Dupin's
theorem
does not hold in the full generality stated
in~\cite{hassairi-regaieg08}, and it requires some additional assumptions.
However, a careful check of the computations of Hassairi and
Regiaeg~\cite{hassairi-regaieg08} in the special case of uniform probability
measures on convex bodies reveals not only Dupin's theorem but all items of
Proposition~\ref{p:C1}.





Regarding the proof of Theorem~\ref{t:four}, the 
Inverse Ray Basis Theorem (Proposition~\ref{p:irbt}) and Corollary~\ref{c:three}
imply that $\delta^{p_0}$ has at least three global minima. This
gives three barycentric hyperplanes via Proposition~\ref{p:C1}.
Furthermore, we also get three maxima of $\delta$, as a maximum
appears at $v$, if and only if a minimum appears at $-v$ (note that $h_v =
h_{-v}$). However, it
should not happen for a $C^1$ function on $S^{n-1}$ that it has only such critical
points. We will show that there is at least one more critical point, which
yields another barycentric hyperplane via Proposition~\ref{p:C1}. Namely, we
show the following proposition.


\begin{proposition}
 \label{p:one_more_point}
Let $n \geq 2$ and let $f \colon S^n \to \R$ be a $C^1$ function. Let $m_1,
  \dots, m_k$ be (not necessarily strict) local minima or maxima of $f$, where $k \geq 3$. Then there exists
  $u \in S^n$, different from $m_1, \dots, m_k$, such that $Df(u) = 0$. 
\end{proposition}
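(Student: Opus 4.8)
The plan is to argue by contradiction: suppose that the only critical points of $f$ on $S^n$ are among the local extrema $m_1,\dots,m_k$, and derive a contradiction with the fact that $k\ge 3$. The basic strategy is a Morse-theoretic / degree-theoretic count. If $f$ were smooth and Morse, one would simply observe that a $C^\infty$ Morse function on $S^n$ whose critical points are all local maxima or minima can have only two critical points in total (one max, one min): indeed each such point contributes $\pm 1$ to the Euler characteristic according to whether it is a min (index $0$) or a max (index $n$), and any index-$0$ or index-$n$ critical point of a Morse function is automatically a strict local extremum and isolated; since $\chi(S^n)$ is $0$ (for $n$ odd) or $2$ (for $n$ even), and all contributions have the same sign pattern, one quickly finds the count of extrema is at most $2$, contradicting $k\ge 3$. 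The obstacle is that we are only given $f\in C^1$, with possibly non-strict, non-isolated extrema, so neither Morse theory nor a naive critical-point count applies directly.

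To handle the $C^1$, non-Morse situation I would pass to a gradient-like flow argument, or better, a direct topological argument using sublevel sets. Here is the cleaner route I would take. Pick values $a_i = f(m_i)$ and, using the hypothesis that $m_1,\dots,m_k$ are the \emph{only} critical points, analyze the sublevel sets $M_c = \{x \in S^n : f(x) \le c\}$. Since between critical values there are no critical points, standard $C^1$ deformation arguments (one can smooth $f$ slightly, or use a $C^0$ pseudo-gradient vector field — the depth of regularity needed here is just enough to integrate a locally Lipschitz vector field) show that $M_c$ changes its homotopy type only as $c$ crosses one of the finitely many values $a_i$. A local minimum $m_i$ at which $f$ has value $a_i$, being isolated among critical points (values slightly above $a_i$ near $m_i$ are regular), contributes a new connected component to the sublevel set when $c$ passes $a_i$; a local maximum similarly "caps off" a component of the superlevel set $\{f \ge c\}$. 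Counting components of $M_c$ as $c$ runs from $\min f$ to $\max f$: each local minimum adds a component, and the only way components disappear is at a critical point that is neither a local max nor a local min — but we assumed there are none. Hence if there are $r$ local minima among the $m_i$, the sublevel set just below $\max f$ has at least $r$ components; since $S^n$ is connected, those $r$ components must all merge, which requires a critical point of "saddle type" (not a local extremum), contradiction unless $r \le 1$. Symmetrically (applying the same to $-f$) there is at most one local maximum. So $k \le 2$, contradicting $k\ge 3$.

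I would present this more carefully as follows. First, reduce to the case that all $m_i$ are \emph{strict} local extrema with distinct critical values: if two extrema are non-strict or share a value, a small $C^1$ perturbation supported near them (or a generic choice) separates them without creating new critical points or destroying the extremal property — alternatively, since they are the only critical points, I can just work with the connected components of the set of global-type behavior directly. Second, establish the $C^1$ deformation lemma: if $[b,c]$ contains no critical value, then $M_b$ is a deformation retract of $M_c$; for $C^1$ functions this follows by flowing along $-\nabla f / \|\nabla f\|^2$ (which is continuous and locally Lipschitz away from critical points, on a compact set, hence integrable). Third, the component-counting: let $0 \le r \le k$ be the number of local minima among the $m_i$ and $s = k - r$ the number of local maxima (every $m_i$ is one or the other). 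Just above $\min f$, $M_c$ has exactly (number of local minima with that value) components; inductively, crossing a local-minimum value adds one component, crossing a local-maximum value adds none and removes none (it only affects the top of the superlevel set), and crossing any other critical value is impossible. Hence $b_0(M_c) = $ number of local minima with value $\le c$, which at $c = \max f$ gives $b_0(S^n) = r$. Since $S^n$ is connected, $r = 1$. Applying the identical argument to $-f$ gives $s = 1$. Therefore $k = r + s = 2$, contradicting $k \ge 3$.

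The main obstacle, and the place I expect the real work to be, is the $C^1$ deformation lemma and the precise bookkeeping of how $b_0$ of the sublevel set changes at a critical value — in particular justifying that crossing the value $a_i$ of an isolated local minimum adds \emph{exactly} one component (it could a priori add several if $f^{-1}(a_i)$ near $m_i$ is complicated, but since $m_i$ is an isolated critical point and a local min, for $c$ slightly above $a_i$ the set $\{f \le c\}$ near $m_i$ is connected — one can see this because $\{f \le c\}$ in a small ball around $m_i$ deformation retracts to $m_i$). There is also a subtlety in that the $m_i$ need not be isolated as a \emph{set} of critical points if we don't first arrange strict extrema with distinct values; handling that reduction cleanly (or circumventing it) is the second delicate point. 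Everything else — the vanishing Euler characteristic intuition, the symmetry $f \leftrightarrow -f$, the use of $n \ge 2$ to guarantee $S^n$ is connected — is routine. One caveat: $n \ge 2$ is needed only to ensure connectedness is the relevant obstruction; for $n = 1$ the statement is genuinely false ($f$ can have exactly the extrema $m_1, m_2$), which is consistent with the hypothesis $n \ge 2$.
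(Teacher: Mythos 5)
Your strategy is genuinely different from the paper's. The paper proves Proposition~\ref{p:one_more_point} by a mountain-pass argument: it picks two local maxima $m_1,m_2$, considers the supremum $s$ of $\min_f(\gamma)$ over paths $\gamma$ joining them, and shows that if no critical point other than the $m_i$ existed, any near-optimal path could be improved by a local, elementary modification using Lemma~\ref{l:larger_f} and the level-set connectivity result Proposition~\ref{p:path_connected}. This completely avoids flowing along a vector field. Your approach instead tracks $b_0$ of the sublevel sets $\{f\le c\}$ as $c$ increases, and is in principle a valid alternative route to the conclusion $k\le 2$; it is closer in spirit to classical Morse theory and, if carried out, would arguably be conceptually cleaner than the paper's path-surgery.

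However, the proof as written has a genuine gap at the $C^1$ deformation lemma. You flow along $-\nabla f/\|\nabla f\|^2$ and assert it is ``continuous and locally Lipschitz away from critical points.'' For $f\in C^1$, $\nabla f$ is merely continuous, not locally Lipschitz, so the normalized gradient field is only $C^0$; its integral curves exist (Peano) but are not unique (no Picard--Lindel\"{o}f), and without uniqueness the flow does not define a deformation retraction. This is exactly the difficulty that makes the $C^1$ case nontrivial and that the paper works hard to circumvent. The standard repair is the Palais pseudo-gradient construction: a locally Lipschitz field $V$ with $Df(x)(V(x))\ge\|Df(x)\|^2$ and $\|V(x)\|\le 2\|Df(x)\|$, built by a partition of unity over neighborhoods where one fixed gradient direction works. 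You mention a ``$C^0$ pseudo-gradient,'' but the whole point of a pseudo-gradient is to upgrade regularity to Lipschitz; as stated this step does not go through.

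A second, smaller omission: your bookkeeping asserts that crossing a local-maximum value neither adds nor removes components of $\{f\le c\}$, with no justification. This is precisely where $n\ge 2$ enters: the set filled in when $c$ crosses the maximum is a small topological ball around $m_i$, and a small ball does not disconnect $S^n$ for $n\ge 2$, so no components of the sublevel set merge. For $n=1$ the analogous small arc does disconnect $S^1$, crossing a local max can merge two components, and the count breaks---which is why the proposition fails for $S^1$. You correctly note the $n=1$ counterexample, but you do not trace the failure back to this step, so the $n\ge 2$ case is asserted rather than proved. (For comparison, the paper uses $n\ge 2$ for a different purpose: to push a path off a point by a homotopy in Claim~\ref{c:avoiding}.)
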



This finishes the proof of Theorem~\ref{t:four} modulo
Propositions~\ref{p:C1} and~\ref{p:one_more_point}.
(Proposition~\ref{p:one_more_point} is applied with $k = 6$.)

The main idea beyond the proof of Proposition~\ref{p:one_more_point} is
that if we have at least three local minima or maxima, then we should also
expect a saddle point (unless there are infinitely many local extremes).
This would be
an easy exercise for Morse functions (which are in particular $C^2$) via Morse
theory (actually, the Morse inequalities would provide even more critical points).
Working with $C^1$ functions adds a few difficulties, but all of
them can be overcome.

\subparagraph*{Relation to probability and statistics.} 
The depth function, as we define it above is a special case of the (Tukey) depth of a probability
measure in $\R^d$, a well-known notion in statistics~\cite{tukey75,donoho82,donoho-gasko92}. More precisely, given a probability measure $\P$ on $\R^d$
and $p \in \R^d$, we can define $\dep(p, \P) := \inf_{v \in S^{n-1}}\P(H_v)$.
Then $\dep(p, K)$ is a special case of the uniform probablity measure on a convex body $K$, i.e., 
$\P(A) := \lambda(A)/\lambda(K)$ for $A$ Lebesgue-measurable. 
We refer
to~\cite{nagy-schutt-werner19} for an extensive recent survey making
many connections between the depth function in statistics and geometric
questions.

There is a vast amount of literature, both in computational geometry and statistics, devoted
to computing the depth function in various settings (which is not easy in
general). We refer, for example,
to~\cite{rousseeuw-struyf98, chan04, bremner-chen-iacono-langerman-morin08,chen-morin-wanger13,
dyckerhoff_mozharovskyi16, liu_mosler_mozharovskyi19} and the references
therein. From this point of view, understanding the minimal possible number of
critical points of the depth function is a quite fundamental property of the
depth function. Via Proposition~\ref{p:C1}, this is essentially equivalent to Gr\"{u}nbaum's questions.


%

\subparagraph*{Organization.} Proposition~\ref{p:counterexample} is proved in
Section~\ref{s:counterexample}; Proposition~\ref{p:C1} is proved in
Section~\ref{s:C1}; and Proposition~\ref{p:one_more_point} is proved in
Section~\ref{s:one_more_point}.

\section{Few hyperplanes realizing the depth}
\label{s:counterexample}

In this section we prove Proposition~\ref{p:counterexample}, assuming Proposition \ref{p:C1}.

\subparagraph*{Preliminaries.} Let us recall that given a bounded measurable set $Y \subseteq \R^n$ of
positive measure, the
\emph{barycenter} of $Y$ is defined as 
\begin{equation}
\label{e:barycenter}
\cen Y = \frac{\int_{\R^n} x \chi_Y(x) dx}{\int_{\R^n}
\chi_Y(x) dx} = \frac{1}{\lambda(Y)}\int_{Y} x dx
\end{equation}
where $\chi_Y$ is the characteristic function and the integral is
considered as a vector in $\R^n$. 

If $Y$ splits as a disjoint union $Y
= Y_1 \sqcup \cdots \sqcup Y_\ell$ of sets of positive measure then 
\begin{equation}
  \label{e:sum_centres}
  \cen Y = \frac1{\lambda(Y)}\left(\sum_{i=1}^{\ell}\lambda(Y_i) \cen
  Y_i\right)
\end{equation}
 which easily follows from~\eqref{e:barycenter}.

If $h$ is a hyperplane, and $Y \subseteq h$ has positive $(n-1)$-dimensional
Lebesgue measure inside $h$, then the formula for the barycenter is analogous
to~\eqref{e:barycenter}:
\begin{equation}
\label{e:barycenter_h}
  \cen Y = \frac{\int_{h} x \chi_Y(x) d\lambda_{n-1}(x)}{\int_h
  \chi_Y(x) d\lambda_{n-1}(x)} = \frac{1}{\lambda_{n-1}(Y)}\int_{Y} x
  d\lambda_{n-1}(x)
\end{equation}
where, for purpose of this formula, $\lambda_{n-1}$ denotes the
$(n-1)$-dimensional Lebesgue measure on $h$. 

If $h \subseteq \R^n$ is a hyperplane whose 
orthogonal projection $\pi(h)$ onto $\R^{n-1}\times\{0\}$ 
(the first $n-1$ coordinates) 
equals $\R^{n-1}\times\{0\}$, 
then $\cen \pi(Y) = \pi(\cen Y)$.

\begin{proof}[Proof of Proposition~\ref{p:counterexample}.]
Let 
$T \subseteq \R^2$ be 
an equilateral triangle with 
  $\cen(T) = 0$ and $I = [-1,1]$. Then $\cen(K) = 0$. In addition, because the
  point of maximal depth $p_0$ is unique and invariant under isometries of $K$,
  we get $p_0  = 0$.

We will use the following notation: $a$, $b$, $c$ are the vertices of $T$ and
  $\alpha$, $\beta$, and $\gamma$ are lines perpendicular to $T$ passing through
  $a$, $b$, and $c$ respectively.

Now let $h$ be a hyperplane passing through $0$. We want to find out whether
  $h$ realizes the depth. We will consider three cases: 
  
  \begin{enumerate}[(i)]
   \item $h$ is perpendicular to $T$; \label{c:i}
   \item $h$ is not perpendicular to $T$ and all
  intersection points of $h$ with $\alpha$, $\beta$, and $\gamma$ belong to~$K$; \label{c:ii}
  \item $h$ is not perpendicular to $T$ and at least one
  of the  intersection points of $h$ with $\alpha$, $\beta$, and $\gamma$ does \label{c:iii}
  not belong to $K$.
  \end{enumerate}

  In case (\ref{c:i}), we will find three candidates for hyperplanes
  realizing the depth. Then we show that there is no hyperplane realizing the
  depth in cases (\ref{c:ii}) and (\ref{c:iii}), which shows that only the three
  candidates from case (\ref{c:i}) may realize the depth. They
  realize the depth because we have at least three hyperplanes realizing the
  depth by the discussion in the introduction above
  Theorem~\ref{t:four}.
  

  Let us focus on case (\ref{c:i}). 
This is the same as considering the lines realizing the depth in an
 equilateral triangle.
  It is easy to check and well known (see
  e.g.~\cite[\S 5.3]{rousseeuw-ruts99}) that the depth of the equilateral
  triangle 
  is $4/9$ and it is realized by lines parallel with the sides of the
  triangle. It follows that we can reach depth $4/9$ in $K$ by hyperplanes perpendicular to
  $T$ and parallel with the three sides of $T$, and all other hyperplanes from
  case (i) bound a portion of $K$ strictly larger than $4/9$ on each of their sides.

 Case (\ref{c:ii}) is very easy: It is easy to compute that each hyperplane of type 
  (\ref{c:ii}) splits $K$ into two parts of equal volume $1/2$.
  Therefore, no such hyperplane 
  realizes the depth.

 Finally, we investigate case (\ref{c:iii}). Here we show that no hyperplane $h$
  of case (\ref{c:iii}) is barycentric. Therefore, by Theorem~\ref{thm:Dupin},
  it cannot realize the depth either.

  We aim to show that $0$ is not the barycenter of $h \cap K$. Let $U$ be the
  orthogonal projection of $h \cap K$ to the triangle $T$. Equivalently, we want to show
  that $0$ is not the barycenter of $U$. We also realize that $U = T \cap S$,
  where $S$ is an infinite strip obtained as the orthogonal projection of $h
  \cap (\R^2 \times I)$ to $\R^2\times \{0\}$
  ; see Figure~\ref{f:U}.

Let $s$ be the center line of $S$. This is the line where $h$ meets the plane
of $T$. 
  We remark that $0$ belongs to $s$ and in addition $U$ is a proper subset of $T$ (otherwise we would
  be in case (\ref{c:ii})). We again distinguish three cases: 
  
  \begin{enumerate}[(a)]
   \item none of the vertices $a, b, c$ belongs to $U$,
   \item  one of the vertices $a, b, c$ belongs to $U$,
   \item two of the vertices $a, b, c$ belong to $U$.
  \end{enumerate}

  In all the cases we will show $\cen U \neq \cen T$.
  \begin{figure}
\begin{center}
 \includegraphics[page=2]{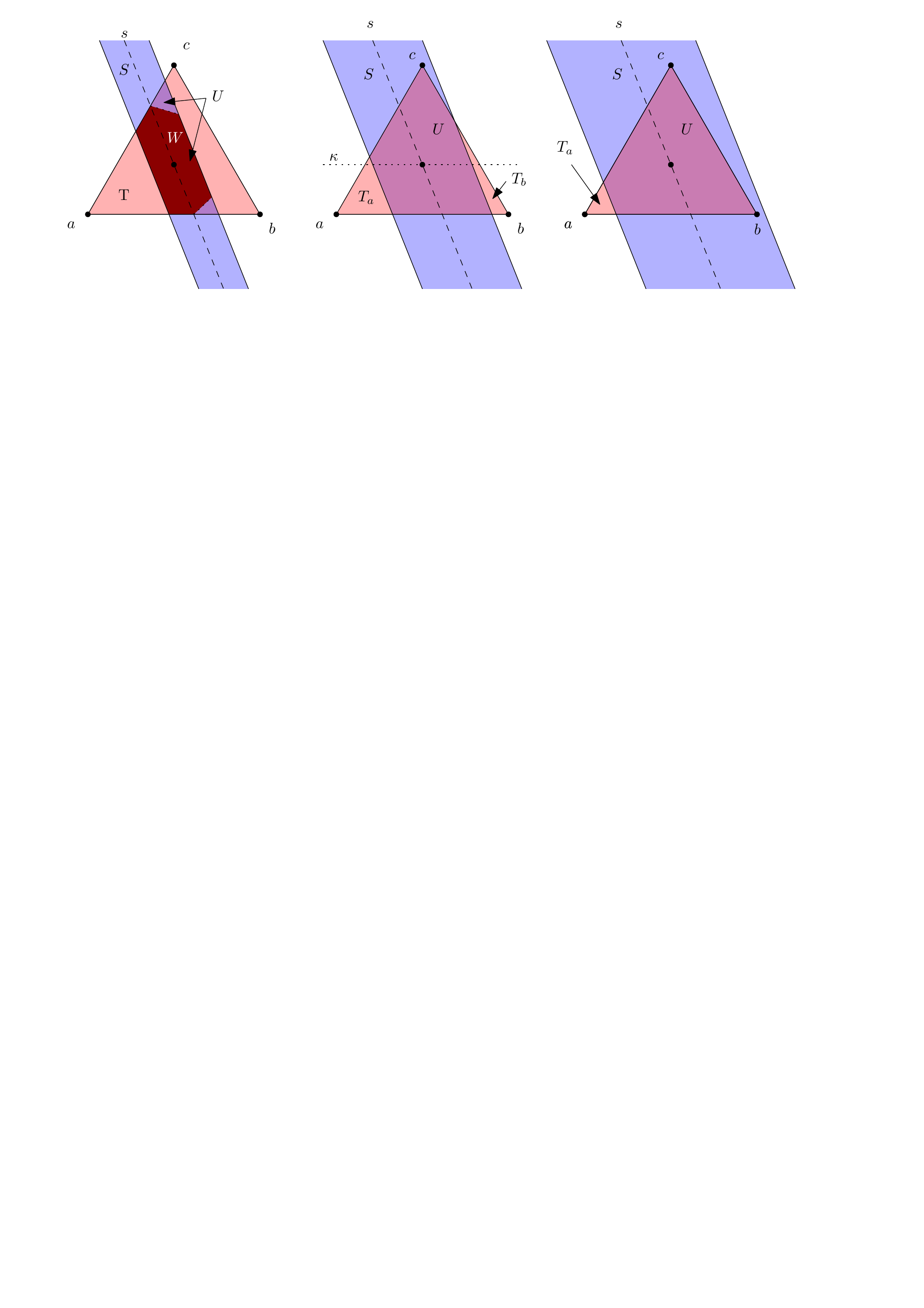}
\caption{Three cases for the intersection $U = T \cap S$.}
  \label{f:U}
\end{center}
  \end{figure}
%
  In case (a), $s$ splits one of the vertices of $T$ from the other two.
  Without loss of generality, $a$ is on one side of $s$ and $b$ and $c$ are on
  the other side. The center line $s$ also splits $U$ into two parts. Let $W'$
  be the (closed) part on the side of $a$, $W''$ be the mirror image of $W'$ along $S$
  and $W := W' \cup W''$. Note that $W$ is a proper subset of $U$; indeed, since $\cen T = 0$ and $T$ is equilateral, the line $s$ splits the segment $ab$ closer to $b$ and the
  segment $ac$ closer to $c$. By the symmetry of $W$, the barycenter $\cen W$
  belongs to the line $s$. However, this means that the barycenter of $U$ is
  not on $s$; it is on the $bc$ side of $s$. Formally, this follows
  from~\eqref{e:sum_centres} for the decomposition $U = W \sqcup (U \setminus W)$. 

In case (b), without loss of generality, $U$ contains $c$. Then $T \setminus
U$ is the union of two triangles $T_a$ and $T_b$. Let $\kappa$ be the line
parallel with $ab$ passing through $0$. Without loss of generality, up to
rotating $T$, $\kappa$ is the $x$-axis. From~\eqref{e:sum_centres}, we get
$0 = \cen T = \frac1{\lambda(T)}(\lambda(U) \cen U + \lambda(T_a) \cen T_a +
\lambda(T_b) \cen T_b)$. The barycenters $\cen T_a$ and $\cen T_b$ are below
the line $\kappa$ or on it. At least one of these barycenters is strictly
below ($\cen T_a$ is on $\kappa$ if and only if $c$ belongs to the closure of
$T_a$, and similarly with $T_b$). 
Therefore, $\cen U$ must be strictly above
$\kappa$ if the above equality is supposed to hold. 

In case (c), it is even more obvious that $\cen U \neq \cen T$. Without loss of
generality $U$ contains $b$ and $c$. Then $T \setminus U$ is a triangle $T_a$.
Since both $T$ and $T_a$ are convex and $T_a$ does not contain $\cen T$, we have $\cen T_a \neq \cen T$. Therefore $\cen T
\neq \cen U$ follows from~\eqref{e:sum_centres} for the decomposition $T = U
\sqcup T_a$.
\end{proof}

\section{Critical points of the depth function}
\label{s:C1}
Here we prove Proposition~\ref{p:C1}. We follow~\cite{hassairi-regaieg08}
with a slightly adjusted notation and adding a few more details here and there.

\begin{proof}[Proof of Proposition~\ref{p:C1}]
Without loss of generality, we can assume that the point $p$ coincides
with the origin and we suppress it from the notation. That is, we write
$\delta$ for the depth function instead of $\delta^p$.


Let $e_1, \ldots, e_n$ be the canonical basis of $\R^n$ and let
\[ S_{j+}^{n-1} = \{u = \sum_{i=1}^n u_ie_i \in S^{n-1}; u_j >
0\} \hskip5mm \hbox{ and } \hskip5mm S_{j-}^{n-1} = \{u = \sum_{i=1}^n u_ie_i \in S^{n-1}; u_j <
0\}
\]
be the relatively open hemispheres  of $S^{n-1}$ with poles at $e_j$ and
$-e_j$, for $j \in [n]$. 
These sets
form an atlas on $S^{n-1}$.

Let us consider $j \in [n]$.
Given $x \in \R^n$ and $i \in [n]$, $x_i$ denotes the $i$th coordinate of
$x$, that is $x= \sum_{i=1}^n x_ie_i$.  With a slight abuse of the notation, we
identify $\R^{n-1}$ with the subspace of $\R^n$ spanned by $e_1, \dots,
e_{j-1}, e_{j+1}, \dots, e_n$. Let $\hat x := \sum_{i=1, i \neq j}^n x_ie_i
\in \R^{n-1}$.
%
 Following~\cite{hassairi-regaieg08} we consider the diffeomorphisms $u \mapsto
 \beta(u) = -\frac{\hat u}{u_j}$ between $S_{j+}^{n-1}$ and $\R^{n-1}$ or
  between $S_{j-}^{n-1}$ and $\R^{n-1}$. 
We will check the required properties of $\delta$ locally at each
of the $2n$ hemispheres $S_{j+}^{n-1}$ or $S_{j-}^{n-1}$ (with respect to the aforementioned
diffeomorphisms). Given that all cases are symmetric, it is sufficient to focus
only on the $S_{n+}^{n-1}$ case. 
  That is, from now on, we assume that $j = n$ and $\R^{n-1}$ is spanned
  by the first $(n-1)$ coordinates in the convention above. Given a point $x
  \in \R^n$, we also write it as $x = ( \hat x; x_n)$.



 Now, for $y \in \R^{n-1}$ we consider the hyperplane $h'_y$ in $\R^n$ containing the origin and defined by 
 \[
  h'_y = \{(\hat x; x_n) \in \R^n \colon x_n = \langle y,\hat x\rangle\}.
 \]
Note that if $u \in S^{n-1}_{j+}$, then $h'_{\beta(u)} = \{x \in
\R^n\colon \langle x, u \rangle = 0\}$.
In particular, since $p$ is the origin, $h'_{\beta(u)}$ coincides with $h_u$ used in
the introduction for definition of the depth function. This also means
that the map
$y \mapsto h'_y$ provides a parametrization of a family of those
hyperplanes containing the origin which do not contain $e_n$. We also
set $H'_y$ to be the positive halfspace bounded by $h'_y$:
\[
 H'_y = \{(\hat x; x_n) \in \R^n \colon x_n \geq \langle y,\hat x\rangle \}.
\]
Again, if $u \in S^{n-1}_{j+}$, then $H'_{\beta(u)}$ coincides with $H_u$ from
the introduction (here we use $u_n > 0$).


Now, we consider the map $f \colon \R^{n-1} \to \R$ defined by 
\begin{equation}
\label{e:big_delta}
 f(y) = \lambda (H'_y \cap K) = \int_{\R^{n-1}} \int_{\langle y,
 \hat x \rangle}^{\infty} \chi_K(\hat x; x_n) dx_n d\hat x, 
\end{equation}
where $\chi_K$ is the characteristic function of $K$. 
When $y = \beta(u)$ for some $u \in S^{n-1}_{j+}$, then $f(\beta(u)) =
\delta(u)$. Therefore, given that the map $u \to \beta(u)$ is a diffeomorphism,
it is sufficient to prove that $f$ is a $C^1$ function and that
$\beta(v) \in \R^{n-1}$ is a critical point of $f$ if and only if
$h'_{\beta(v)} = h_v$ is barycentric.



The aim now is to differentiate $f(y)$ with respect to $y$.
We will show that the total derivative equals
\begin{equation}
\label{e:df}
Df(y) = - \int_{\R^{n-1}} \hat x \cdot \chi_K(\hat x; \langle y, \hat
x\rangle) d \hat x
\end{equation}
considering the integral on the right-hand side as a vector.
Deducing~\eqref{e:df} is a quite routine computation skipped
in~\cite{hassairi-regaieg08}.\footnote{When compared with formula (3.1)
in~\cite{hassairi-regaieg08}, we obtain a different sign in front of the
integral. This is caused by integration over the opposite halfspace.} 
However, this is the step in the proof of Theorem~3.1 
in~\cite{hassairi-regaieg08} which reveals that some extra assumptions
in~\cite{hassairi-regaieg08} are necessary. Thus we carefully
deduce~\eqref{e:df} at the end of this proof for completeness.


We will also see that all partial derivatives of $f$ are continuous
which
means that $f$ is a $C^1$ function
which is one of our required conditions. Now we want to show that
$Df(\beta(v)) = 0$ if and only if $h_v$ is barycentric.

First, assume that $Df(\beta(v)) = 0$.
This gives
\begin{equation}
\label{e:zero_barycenter}
  0 = \frac{\int_{\R^{n-1}} \hat x \cdot \chi_K(\hat x; \langle \beta(v), \hat
  x\rangle) d \hat x}{\int_{\R^{n-1}} \chi_K(\hat x; \langle
  \beta(v), \hat
    x\rangle) d \hat x}
\end{equation}
which means that $0$ is the barycenter of $K \cap h'_{\beta(v)}$ from the
definition of $h'_{\beta(v)}$. 

On the other hand, if $0$ is the barycenter of $K \cap h'_{\beta(v)}$,
then we deduce~\eqref{e:zero_barycenter} which implies $Df(\beta(v)) = 0$.


It remains to show~\eqref{e:df}. For this purpose, we compute partial
derivatives $\frac{\partial}{\partial y_k} f(y)$, $1 \leq k \leq n-1$. 
In the following computations, recall that $e_k$ stands for the standard basis vector
for the $k$th coordinate and let $\int_a^b := -\int_b^a$ if $a > b$. We get
\begin{align*}
 \frac{\partial}{\partial y_k}f(y) 
  &=  \lim_{t \to 0}\frac1{t}\int_{\R^{n-1}}\left(\int_{\langle y + t e_k, \hat
  x\rangle}^{\infty} \chi_K(\hat x; x_n) d x_n - \int_{\langle y, \hat
    x\rangle}^{\infty} \chi_K(\hat x; x_n) d x_n\right) d\hat x \\
    &=  \lim_{t \to
    0}\int_{\R^{n-1}}\frac1t\int_{\langle y, \hat
      x\rangle + t x_k}^{\langle y, \hat x\rangle} \chi_K(\hat x; x_n)
      d x_n d\hat x.
\label{e:partial_fy}
\end{align*}

Let $y, \hat x \in \R^{d-1}$ be such that $( \hat x; \langle y, \hat x\rangle)
\not \in \partial K$. Then we get
\[
  \lim_{t\to 0}  \frac1t\int_{\langle y, \hat
        x\rangle + t x_k}^{\langle y, \hat x\rangle} \chi_K(\hat x; x_n)
	      d x_n = -x_k\chi_K(\hat x; \langle y, \hat x \rangle),
\label{e:limit_inner_integral}
\]
 because $( \hat x; \langle y, \hat x\rangle)
 \not \in \partial K$ implies that the function $\chi_K(\hat x; x_n)$ as a
 function of $x_n$ is constant on the interval $(\langle y, \hat
 x\rangle - |t x_k|, \langle y, \hat  x\rangle + |t x_k|)$ for small enough
 $|t|$. Therefore, by the dominated convergence theorem, 
\begin{equation}
 \frac{\partial}{\partial y_k}f(y) = \int_{\R^{n-1}}-x_k \chi_K(\hat x; \langle
 y, \hat x\rangle) d\hat x. 
\label{e:partial_fy_final}
\end{equation}
 
 For fixed $y$, the condition $( \hat x; \langle y, \hat x\rangle)
  \not \in \partial K$ holds for almost every $\hat x$ because $( \hat x;
  \langle y, \hat x\rangle) \in h_y$ and $h_y$ passes through the interior of
  $K$ (through the origin). 
%
  By another application of dominated convergence
  theorem, we realize that the right hand side of~\eqref{e:partial_fy_final} is
  continuous in $y$ (this time, we consider a sequence $y^i \to y$ and we
  observe that $\chi_K(\hat x; \langle y^i, \hat x\rangle) \to \chi_K(\hat
  x; \langle y, \hat x\rangle)$ for almost every $\hat x$). Therefore the total
  derivative of $f$ at any $y$ exists and \eqref{e:partial_fy_final} gives the
  formula~\eqref{e:df}.
\end{proof}

\begin{remark}
  In the last paragraph of the proof above we crucially use the convexity of $K$.
  Without convexity, there is a compact nonconvex polygon $K' \subseteq \R^2$,
  with $0$ in the interior, such that there is $y$ with the property that the
  set of those $\hat x$ for which $( \hat x; \langle y, \hat x\rangle)
   \in \partial K'$ has positive measure;
  see Figure~\ref{f:two_squares}.
  In fact, even~\eqref{e:df} does not hold for $K'$. Here we took $K'$ to be
  the polygon from Example~7 of~\cite{nagy-schutt-werner19}, and we refer the reader to that paper for more
  details.
\end{remark}

\begin{figure}
\begin{center}
  \includegraphics[page=2]{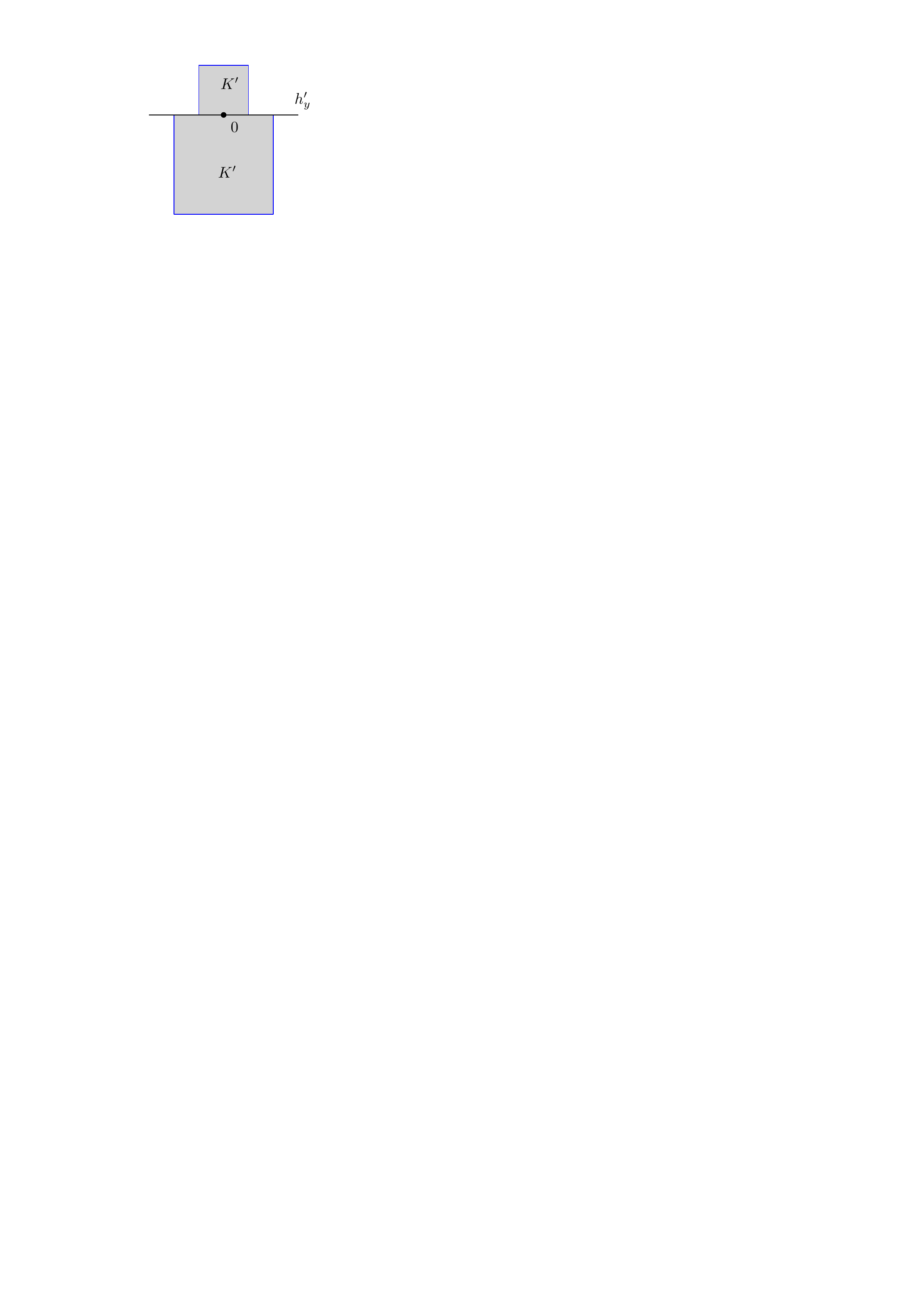}
\caption{A nonconvex polygon $K'$ and $y$ such that the total derivative of $f$
  does not exist at $y$. }
  \label{f:two_squares}
\end{center}
\end{figure}
\section{One more critical point}
\label{s:one_more_point}
%
In this section, we prove Proposition~\ref{p:one_more_point}. 
%
%
Given a manifold $M$ and a continuous function $f\colon M \to \R$ and $s \in
\R$ we define the \emph{level set} $L_s := \{w \in M\colon  f(w) = s\}$. 
In the proof of Proposition~\ref{p:one_more_point} we will need that the
level sets are well-behaved in the neighborhoods of points $u$ for which the
total derivative $Df(u)$ is nonzero.

%

\begin{proposition}
\label{p:path_connected}
Let $n\geq 1$, $f\colon \R^n \to \R$ be a $C^1$ function and $u \in \R^n$ be
  such that $Df(u) \neq 0$. Then there is a neighborhood $N(u)$ of $u$ such that 
  for every $v,w \in N(u)$ if $f(v) = f(w)$, then $v$ and $w$ can be connected
  with a path within the level set $L_{f(v)}$. (It is allowed that this path
  leaves $N(u)$ provided that it stays in $L_{f(v)}$.)
\end{proposition}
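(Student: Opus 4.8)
The plan is to deduce this directly from the implicit function theorem; this is essentially the argument that appears (for $M=\R^n$) inside the proof of the old Proposition on Reeb graphs, streamlined to the statement we actually need. After permuting coordinates and possibly replacing $x_n$ by $-x_n$, I may assume $\frac{\partial f}{\partial x_n}(u) > 0$. Writing $x = (\hat x, x_n)$ with $\hat x \in \R^{n-1}$, consider the $C^1$ map $F \colon \R^{n-1}\times\R\times\R \to \R$, $F(\hat x, t, x_n) := f(\hat x, x_n) - t$. Then $\frac{\partial F}{\partial x_n} = \frac{\partial f}{\partial x_n}$ and $F(\hat u, f(u), u_n) = 0$, so the implicit function theorem yields a neighborhood $N''$ of $(\hat u, f(u))$ in $\R^{n-1}\times\R$ and a $C^1$ function $g \colon N'' \to \R$ with $g(\hat u, f(u)) = u_n$ and $F(\hat v, t, g(\hat v, t)) = 0$, i.e.
\[
  f(\hat v, g(\hat v, t)) = t \qquad \text{for all } (\hat v, t) \in N''.
\]

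Next I would carry out the (routine) shrinking of neighborhoods. First restrict so that $N'' = N''(\hat u) \times N''(f(u))$ is a product of open balls. Since $\frac{\partial F}{\partial x_n}$ is continuous and positive at $(\hat u, f(u), u_n)$, further shrink $N''$ and pick a ball $N'''(u_n)$ around $u_n$ so that $\frac{\partial F}{\partial x_n} > 0$ on $N'' \times N'''(u_n)$, and shrink once more so that $g(N'') \subseteq N'''(u_n)$. Monotonicity of $x_n \mapsto F(\hat v, t, x_n)$ on $N'''(u_n)$ then gives the uniqueness statement: for $(\hat v, t) \in N''$, if $f(\hat v, x_n) = t$ with $x_n \in N'''(u_n)$, then $x_n = g(\hat v, t)$. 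Finally, using continuity of $v \mapsto (\hat v, f(v))$ at $u$, choose a ball $N(u)$ around $u$ small enough that $N(u) \subseteq N''(\hat u)\times N'''(u_n)$ and $f(v) \in N''(f(u))$ for every $v \in N(u)$; in particular $(\hat v, f(v)) \in N''$ and $v_n \in N'''(u_n)$ for all $v \in N(u)$, so the uniqueness statement applies to every point of $N(u)$ and gives $v_n = g(\hat v, f(v))$.

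The path is then built by lifting. Given $v, w \in N(u)$ with $t := f(v) = f(w)$, uniqueness gives $v_n = g(\hat v, t)$ and $w_n = g(\hat w, t)$. Since $N''(\hat u)$ is a ball, choose any path $P$ in $N''(\hat u)$ from $\hat v$ to $\hat w$, and set $P_t := \{(\hat x, g(\hat x, t)) : \hat x \in P\}$; this is the continuous image of $P$ under $\hat x \mapsto (\hat x, g(\hat x, t))$, hence a path from $v$ to $w$, and $f(\hat x, g(\hat x, t)) = t$ by the displayed identity, so $P_t \subseteq L_t$. (Note $P_t$ is allowed to leave $N(u)$, which is fine since it stays in $L_t$.) I expect the only real care needed — the "main obstacle," though it is bookkeeping rather than a genuine difficulty — is the order of the nested restrictions of neighborhoods, so that for every $v \in N(u)$ all three conditions $\hat v \in N''(\hat u)$, $f(v) \in N''(f(u))$, $v_n \in N'''(u_n)$ hold simultaneously, which is exactly what makes the uniqueness statement applicable and forces $v_n = g(\hat v, f(v))$.
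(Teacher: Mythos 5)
Your proposal is correct and is essentially the paper's own proof: same change of coordinates, same auxiliary map $F$, same use of the implicit function theorem to get $g$, same shrinking of neighborhoods to force uniqueness, and the same lift of a path in $N''(\hat u)$ via $\hat x \mapsto (\hat x, g(\hat x, t))$. If anything, your final restriction ensuring $v_n \in N'''(u_n)$ for all $v \in N(u)$ makes the applicability of the uniqueness statement slightly more explicit than in the paper, which defines $N(u)=\Psi^{-1}(N')$ without visibly imposing that constraint.
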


\begin{proof}
Without loss of generality assume
that $\frac{\partial f}{\partial x_n}(u) > 0$, otherwise we permute the
coordinates and/or swap $x_n$ and $-x_n$. Consistently with the previous
  section, given $x \in \R^n$, we write $x =
(\hat x, x_n)$ where $\hat x \in \R^{n-1}$ and $x_n \in \R$. Now we
consider the $C^1$ function $F \colon \R^{n-1} \times \R \times \R \to \R$ defined as
$F(\hat x, t, x_n) := f(\hat x, x_n) - t$. Note that
$\frac{\partial F}{\partial x_n} = \frac{\partial f}{\partial x_n}$. We also
observe that $F(\hat u, f(u), u_n) = 0$. Therefore, by the implicit
function theorem, there is an open neighborhood $N'$ of $(\hat u,
f(u))$
in $\R^{n-1} \times \R$ such that there is a $C^1$ function $g \colon N' \to
\R$ with $g(\hat u, f(u)) = u_n$ and that $F(\hat v, t,
g(\hat v, t)) = 0$ for any $(\hat v, t) \in
N'$. From the definition of $F$ this gives
\begin{equation}
  \label{e:fgt}
  f(\hat v, g(\hat v, t)) = t.
\end{equation}

By possibly restricting the neighborhood to a smaller set, we can assume
that $N'$ is the Cartesian product of a neighborhood $N'(\hat u)$ of
$\hat u$ in $\R^{n-1}$ and $N'(f(u))$ of $f(u)$ in $\R$, and that both
$N'(\hat u)$ and $N'(f(u))$ are open balls. 
Moreover, 
we can assume
that $\frac{\partial F}{\partial x_n}(\hat v, t, v_n) > 0$ for any
$(\hat v, t, v_n) \in N' \times N''(u_n)$ where $N''(u_n)$ is some
neighborhood of $u_n$ in $\R$, again a ball. Now we possibly further restrict
  $N'(\hat
u)$ and $N'(f(u))$ so that $g(\hat v, t)$ belongs to $N''(u_n)$ for any
$(\hat v, t) \in N'$.


The condition on the partial derivative of $F$
implies that for every $(\hat v, t) \in N'$ the equation $F(\hat v,
t, x_n) =
0$ has at most one solution $x_n \in N''(u_n)$. Therefore it has a unique
solution $x_n = g(\hat v,
t)$. In other words we get:
\begin{equation}
  \label{e:if_then}
  \hbox{If } f(\hat v, x_n) = t, \hbox{ then } x_n = g(\hat v,t).
\end{equation}

Now, we define $N(u) := \Psi^{-1}(N')$ where $\Psi\colon \R^{n-1} \times \R \to
\R^{n-1} \times \R$ is defined as $\Psi(v) = (\hat v, f(v))$ for any $v
\in \R^{n-1} \times \R$. 
In particular $(\hat v, f(v))$ belongs to $N'$ for any $v \in N(u)$.

  Let $t := f(v) = f(w)$. From~\eqref{e:if_then} we get $v_n = g(\hat v,
  t)$ and $w_n = g(\hat w, t)$. Let us consider an arbitrary path
  $P\colon [0,1] \to N'(\hat u)$ connecting $\hat v$ and $\hat w$. Let
  us `lift' $P$ to a path $P_t \colon [0,1] \to \R^{n-1} \times \R$ given by 
  $P_t(s)  := (P(s), g(P(s), t))$. This is a path connecting $v$ and $w$. We will
  be done once we show $P_t([0,1]) \subseteq L_t$. This means that we are
  supposed to show that $f(P(s), g(P(s), t)) = t$
  for
  every $s \in [0,1]$ which follows from~\eqref{e:fgt}. 
\end{proof}

\paragraph*{Total derivatives and gradients.}
Let $f \colon \R^n \to \R$ be a $C^1$ function. Then for any $u \in \R^n$, the
total derivative $Df(u)$ is represented by a row vector
$  \left( \frac{\partial}{\partial x_1}f(u), \dots, \frac{\partial}{\partial
  x_n}f(u)\right)$ (if $Df(u)$ exists). By $\|Df(u)\|$ we mean the Euclidean norm
  of this vector. The gradient $\nabla f(u)$ is the same vector transposed
\[
  \nabla f(u) := \left( \frac{\partial}{\partial x_1}f(u), \dots,
  \frac{\partial}{\partial
    x_n}f(u)\right)^T.
  \]
Then $\|\nabla f(u) \| = \|Df(u)\|$, and in addition $Df(u)(\nabla f(u)) =
\|Df(u)\|^2$.

Let $x \in \R^n$ and $\rho > 0$, by $B(x,\rho)
\subseteq \R^n$ we
denote the 
compact ball of radius $\rho$ centered in $x$ with respect to the
standard Euclidean metric.


\begin{lemma}
  \label{l:larger_f}
  Let $f \colon \R^n \to \R$ be a $C^1$ function, let $x \in \R^n$ and let
  $\zeta, \rho >
  0$. Assume that $\|Df(u)\| \geq \zeta$ for every $u \in B(x, \rho)$. Then
  there is $v \in B(x, \rho)$ such that $f(v) \geq f(x) + \frac{\zeta\rho}2$.
\end{lemma}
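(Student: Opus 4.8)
The plan is to follow the gradient flow starting at $x$ and bound how much $f$ increases along it using the quantitative lower bound $\|Df(u)\| \ge \zeta$. Concretely, I would consider the initial value problem $\gamma'(s) = \nabla f(\gamma(s))/\|\nabla f(\gamma(s))\|$ with $\gamma(0) = x$, so that $\gamma$ is parametrized by arc length. Since $f$ is only $C^1$, the vector field $u \mapsto \nabla f(u)/\|\nabla f(u)\|$ is merely continuous (not Lipschitz), so Picard--Lindel\"of does not apply; however, on the ball $B(x,\rho)$ the denominator is bounded below by $\zeta > 0$, so the field is continuous and bounded there, and Peano's existence theorem gives a solution $\gamma$ defined at least until it exits $B(x,\rho)$. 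Because $\gamma$ has unit speed, it stays inside $B(x,\rho)$ for all $s \in [0,\rho]$, so we may take $\gamma \colon [0,\rho] \to B(x,\rho)$.

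Next I would compute the rate of increase of $f$ along $\gamma$. Using the chain rule (valid since $f \in C^1$ and $\gamma$ is $C^1$),
\[
  \frac{d}{ds} f(\gamma(s)) = Df(\gamma(s))\bigl(\gamma'(s)\bigr)
    = Df(\gamma(s))\!\left(\frac{\nabla f(\gamma(s))}{\|\nabla f(\gamma(s))\|}\right)
    = \frac{\|Df(\gamma(s))\|^2}{\|\nabla f(\gamma(s))\|} = \|Df(\gamma(s))\|,
\]
where the last equalities use the two identities recorded just before the lemma, namely $Df(u)(\nabla f(u)) = \|Df(u)\|^2$ and $\|\nabla f(u)\| = \|Df(u)\|$. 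Since $\gamma(s) \in B(x,\rho)$ we have $\|Df(\gamma(s))\| \ge \zeta$, hence
\[
  f(\gamma(\rho)) - f(x) = \int_0^{\rho} \|Df(\gamma(s))\|\,ds \ge \zeta\rho \ge \frac{\zeta\rho}{2},
\]
so $v := \gamma(\rho)$ works (in fact with $\zeta\rho$ rather than $\zeta\rho/2$; the weaker bound gives room to spare, e.g.\ if one prefers to integrate only up to time $\rho$ assuming strict containment, or to handle the boundary more carefully).

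The main obstacle I anticipate is the non-uniqueness/non-Lipschitz issue: one must be careful that a gradient-flow trajectory actually exists for the full time interval $[0,\rho]$ and does not run into a zero of $\nabla f$. The lower bound $\|Df(u)\| \ge \zeta$ on the whole ball is exactly what rules this out, since it both keeps the normalized vector field continuous and bounded on $B(x,\rho)$ (so Peano applies) and guarantees the integrand stays $\ge \zeta$. An alternative that sidesteps ODE existence entirely would be a discretization argument: build a polygonal path $x = v_0, v_1, \dots, v_N$ with $\|v_{j+1} - v_j\|$ small and $v_{j+1} - v_j$ pointing in the direction $\nabla f(v_j)/\|\nabla f(v_j)\|$; a first-order Taylor estimate with the continuity of $Df$ gives $f(v_{j+1}) - f(v_j) \ge \|Df(v_j)\|\,\|v_{j+1}-v_j\| - o(\|v_{j+1}-v_j\|) \ge \zeta\|v_{j+1}-v_j\| - o(\|v_{j+1}-v_j\|)$, and summing over a sufficiently fine partition with total length $\rho$ yields $f(v_N) - f(x) \ge \zeta\rho/2$ once the error terms are absorbed. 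This discrete version is likely cleaner to write rigorously in the $C^1$ setting and is probably the route I would actually take.
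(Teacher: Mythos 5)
Your proof is correct, but it takes a genuinely different route from the paper's. You run normalized gradient flow from $x$ and integrate the rate of increase, whereas the paper avoids ODE theory altogether: it considers the compact set $K := \{y \in B(x,\rho)\colon f(y) \geq f(x) + \tfrac{\zeta}{2}\|y-x\|\}$, takes a maximizer $w$ of $f$ on $K$, observes that $w$ must lie in the interior of the ball (else we're already done), and derives a contradiction by showing that a short step $w + t\nabla f(w)$ both stays in $K$ (this is where the factor $\tfrac12$ enters, via the triangle inequality in the exponent $\|w-x\| + \|u-w\| \ge \|u-x\|$) and strictly increases $f$. Your argument buys a cleaner conclusion and in fact a sharper constant, $\zeta\rho$ rather than $\zeta\rho/2$, but pays for it by invoking Peano's theorem; the paper's argument is elementary and self-contained. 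One wrinkle in your write-up you should tighten: Peano alone only gives local existence, so you need (i) to extend the vector field $\nabla f/\|\nabla f\|$ to an open neighborhood of $B(x,\rho)$ — continuity of $Df$ gives $\|Df\| \ge \zeta/2 > 0$ on such a neighborhood — and (ii) a continuation argument: since the solution has unit speed it cannot leave $B(x,\rho)$ before time $\rho$, so the maximal solution, which must eventually exit every compact subset of the domain, exists at least on $[0,\rho]$. You flag this as "the main obstacle," which is the right instinct; the discretized polygonal-path alternative you sketch would indeed sidestep it and is closer in spirit (local step + summation) to the paper's approach.
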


\begin{figure}
  \begin{center}
    \includegraphics{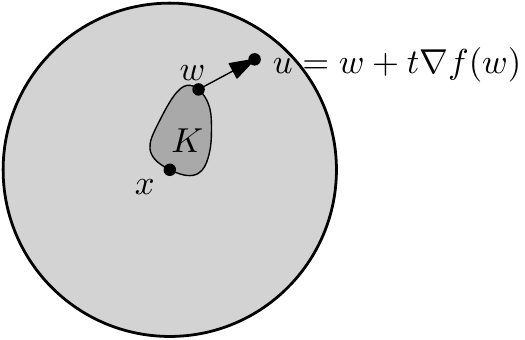}
    \caption{The set $K$ inside $B(x,\rho)$. For contradiction $M < f(x) +
    \frac{\rho\zeta}2$ which implies that $K$ does not touch the boundary of
    the ball. Then $t > 0$ can be chosen so that $u = w + t \nabla f(w)$ still
    belongs to $B(x,\rho)$.}
    \label{f:gradient}
  \end{center}
\end{figure}

\begin{proof}
Let 
  \[ K := \{y \in B(x, \rho)\colon f(y) \geq f(x) + \frac\zeta2\|y -
  x\|\}.
    \]
This is a closed therefore compact set, it is also nonempty because $x \in K$.
%
  Let $M := \max\{f(y)\colon y \in K\}$, for contradiction $M <
  f(x) + \frac{\zeta\rho}2$. Let $w \in K$ be such that $f(w) = M$. Note that
  for every $v \in \partial B(x, \rho) \cap K$ we get $f(v) \geq f(x) +
  \frac{\zeta\rho}2$ because $\|v-x\| = \rho$ in this case. Thus, in
  particular, $w \not \in \partial B(x, \rho)$. See Figure~\ref{f:gradient}.

  Consider the derivative at $w$ in the direction of the gradient $\nabla
  f(w)$.
  From properties of the total derivative, we get
  \[
  \lim_{t \to 0} \frac{|f(w + t \nabla f(w)) - f(w) -
  Df(w)(t\nabla f(w))|}{t\|\nabla f(w)\|} = 0.
  \]
 Therefore, for small enough $t > 0$ we get 
  \[|f(w + t \nabla f(w)) - f(w) -  t\|Df(w)\|^2| \leq \frac{\zeta}2t
  \|Df(w)\|.\] 
  Consequently,
  \[f(w) + t\|Df(w)\|^2 - f(w + t \nabla f(w)) \leq \frac{\zeta}2t
    \|Df(w)\|.\]
  Let $u = w + t \nabla f(w)$. If $t$ is small enough then $u \in B(x, \rho)$.
  Using $\|Df(w)\| \geq \zeta$ this gives
\begin{equation}
\label{e:f(u)nabla}
  f(u) \geq f(w) + t\|Df(w)\|^2 - \frac{\zeta}2t
      \|Df(w)\| \geq f(w) + \frac{\zeta}2t\|Df(w)\| = f(w) +
      \frac{\zeta}2t\|\nabla f(w)\|.
\end{equation}

  Because $w \in K$ and $t\|\nabla f(w)\| = \|u-w\|$, we further get
\begin{equation}
\label{e:f(u)norm}
  f(u) \geq f(x) + \frac{\zeta}2(\|w - x\| + \|u - w\|) \geq f(x) +
  \frac{\zeta}2(\|u-x\|).
\end{equation}
  Equation~\eqref{e:f(u)norm} gives that $u \in K$ while~\eqref{e:f(u)nabla}
  gives $f(u) > M$. This is a contradiction
with the choice of $M$.
\end{proof}

\begin{proof}[Proof of Proposition~\ref{p:one_more_point}]
 First, we can assume that all local extrema $m_1,
  \dots, m_k$ are strict. Indeed, if some of them is not strict, say $m_1$,
  then we can find $u \neq m_1, \dots, m_k$ with $Df(u) = 0$ in a neighborhood
  of $m_1$.

Next, because $k \geq 3$, there are at least two local maxima or two local
  minima among $m_1, \dots, m_k$. Without loss of generality, $m_1$ and $m_2$
  are local maxima. 
 
  Now, let us
  consider a path $\gamma \colon [0,1] \to S^n$ such that $\gamma(0) = m_1$ and
  $\gamma(1) = m_2$. Let $\min_f(\gamma):=
  \min\{f(\gamma(t)) \colon  t \in [0,1]\}$ (the minimum exists by compactness)
  and let $s := \sup(\min_f(\gamma))$
  where the supremum is taken over all $\gamma$ as above. 

 Before we proceed with the formal proof, let us sketch the main idea of the
  proof; see also Figure~\ref{f:mountains}. For contradiction assume that $Df(u) \neq 0$ for every $u \in
  S^n\setminus\{m_1, \dots, m_k\}$. Consider $\gamma$ such that $\min_f(\gamma)$ is very close to $s$. We
  will be able to argue that we can assume that such $\gamma$ is not close to
  any of the other extremes $m_3, \dots, m_k$. This guarantees that
  $\|Df(\gamma(t))\|$ is bounded from $0$ for every $t \in [0,1]$ except the
  cases when $\gamma(t)$ is close to $m_1$ or $m_2$. Using
  Lemma~\ref{l:larger_f}, we will be able to modify $\gamma$ to $\gamma'$ with 
 $\min_f(\gamma') > s$ obtaining a contradiction with the definition of $s$.

  \begin{figure}
    \begin{center}
      \includegraphics{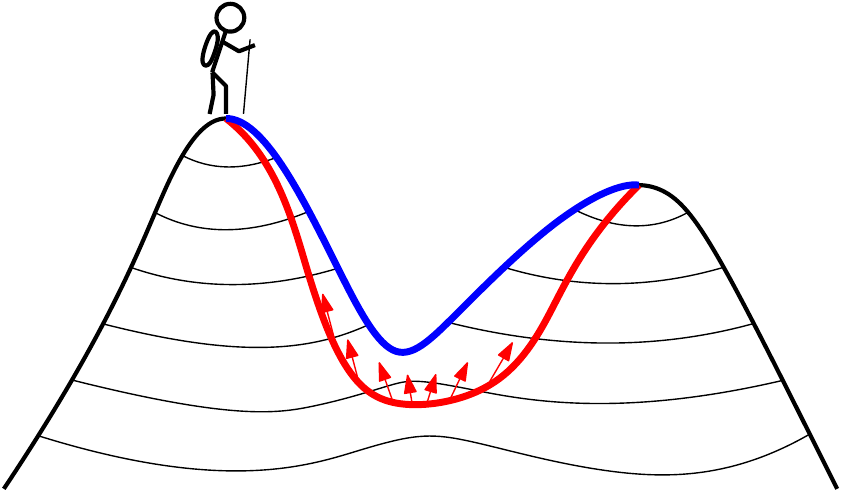}
      \caption{If we are in mountains and we want to hike from one peak to
      another without losing too much altitude, then the best way is to pass through a saddle point (see the upper path
      in blue). If we do not pass very close to a saddle point, then the positive
      gradient allows us to improve the path (see the lower path in red).}
      \label{f:mountains}
    \end{center}
  \end{figure}

 In further consideration, we consider the standard metric on $S^n$ obtained by
  the standard embedding of $S^n$ into $\R^{n+1}$ and restricting the Euclidean
  metric on $\R^{n+1}$ to a metric on $S^n$. For every $i \in [k]$, we pick 
  two closed metric\footnote{By a metric ball we mean a ball with a given
  center and radius. This way, we distinguish a metric ball from a general
  topological ball.} balls $B_i$ and $B'_i$ centered in $m_i$. Namely, $B_i$
  is chosen so that $m_i$ is a global extreme on $B_i$. We also assume that the
  balls $B_i$ are pairwise disjoint. Next, we distinguish whether $m_i$ is a
  local maximum or minimum. If $m_i$ is a local maximum, let us define $a_i :=
  \max\{f(x)\colon x  \in \partial B_i\}$. Note that $f(m_i) > a_i$ as $m_i$ is
  a global maximum on $B_i$. Then we pick a closed ball $B'_i$ centered in $m_i$
  inside $B_i$ so that $f(x) > a_i$ for every $x \in B'_i$. 
  If $m_i$ is a local
  minimum, we proceed analogously. We set $a_i :=
    \min\{f(x)\colon x  \in \partial B_i\}$ and we pick $B'_i$ so that $f(x)
    < a_i$ for every $x \in B'_i$. 
    For later use, we also define $a'_i := \min\{f(x)\colon x \in B'_i\}$ for
    $i \in \{1,2\}$. Note that $a'_i > a_i$. 
    
    Given a path $\gamma$ connecting
    $m_1$ and $m_2$, we say that $\gamma$ is \emph{avoiding} if it does not
    pass through the interior of any of the balls $B'_3, \dots, B'_k$.

  \begin{claim}
\label{c:avoiding}
    Let $\gamma$ be a path connecting $m_1$ and $m_2$. Then there is an avoiding
    path $\bar \gamma$ connecting $m_1$ and $m_2$ such that $\min_f(\bar \gamma) \geq
    \min_f(\gamma)$.
  \end{claim}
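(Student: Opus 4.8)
The plan is to fix the path $\gamma$ one bad ball $B'_i$ at a time, replacing the portion of $\gamma$ that dips into $B'_i$ by a detour along $\partial B_i$; such a detour never goes below (resp.\ above) the value $a_i$, and by the choice of $B'_i$ this value $a_i$ is \emph{larger} (resp.\ smaller) than the values of $f$ inside $B'_i$, so the minimum of $f$ along the path does not decrease. Since there are only finitely many balls $B'_3,\dots,B'_k$ and each repair can be done without re-entering the balls already taken care of, iterating this procedure finitely many times yields an avoiding path.

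Here are the steps in order. First I would reduce to the case of a single index $i\in\{3,\dots,k\}$: suppose $\gamma$ enters the interior of $B'_i$, and let me repair it for that $i$ while keeping the path outside the (closed) complement issue for the other balls. Second, consider the compact set $\gamma^{-1}(\operatorname{int} B'_i)\subseteq(0,1)$; its complement $C$ in $[0,1]$ is closed and contains $0,1$. Take $t_0:=\sup\{t: \gamma([0,t])\cap \operatorname{int}B'_i=\emptyset\}$ and $t_1:=\inf\{t\ge t_0:\gamma(t)\notin \operatorname{int}B'_i\}$, i.e.\ the first entry into and last-after-that exit from the small ball. Actually, since the path may enter and leave $B'_i$ several times, it is cleanest to let $t_0$ be the first time $\gamma$ hits $\partial B_i$ with $\gamma$ about to pass into $B'_i$, and $t_1$ the last such exit time, so that $\gamma(t_0),\gamma(t_1)\in\partial B_i$ and $\gamma([0,t_0])\cup\gamma([t_1,1])$ avoids $\operatorname{int}B'_i$; one can always arrange $\gamma(t_0),\gamma(t_1)\in\partial B_i$ since $\gamma$ must cross the annulus $B_i\setminus\operatorname{int}B'_i$ to reach $\operatorname{int}B'_i$ (as $m_1,m_2\notin B_i$ for $i\ge3$ because the $B_i$ are pairwise disjoint and $m_1\in B_1$, $m_2\in B_2$). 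Third, $\partial B_i$ is connected (it is a sphere $S^{n-1}$ with $n\ge 2$, hence $n-1\ge1$), so I can pick a path $\eta$ in $\partial B_i$ from $\gamma(t_0)$ to $\gamma(t_1)$. Replace the middle portion: $\bar\gamma:=\gamma|_{[0,t_0]}\cdot\eta\cdot\gamma|_{[t_1,1]}$. Fourth, estimate $\min_f(\bar\gamma)$: on $\gamma|_{[0,t_0]}$ and $\gamma|_{[t_1,1]}$ the values of $f$ already occur on $\gamma$; on $\eta\subseteq\partial B_i$ we have, if $m_i$ is a local maximum, $f\ge$ ? — here the sign works out because what we need is a \emph{lower} bound, and on $\partial B_i$ there is no lower bound from $a_i=\max_{\partial B_i}f$. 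So instead I bound by the fact that the deleted portion $\gamma|_{(t_0,t_1)}$ of the original path passes through $\operatorname{int}B'_i$, where (for a local maximum) $f>a_i\ge$ the values on any competing piece; more directly: every value of $f$ on $\bar\gamma$ is either a value of $f$ on $\gamma$, or a value of $f$ on $\partial B_i$. For a local maximum $m_i$, values on $\partial B_i$ lie in $[\min_{\partial B_i}f, a_i]$, and $\min_{\partial B_i}f$ could in principle be small. The fix: choose $B_i$ small enough at the outset (we are free to, since $m_i$ is a local extremum and $f$ is continuous) that $f>\min_f(\gamma)$ on all of $B_i$ — wait, $\gamma$ is given after the $B_i$.

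The honest resolution, and the step I expect to be the main obstacle, is the order of quantifiers: the balls $B_i,B'_i$ were fixed before $\gamma$. So on $\partial B_i$ I cannot assume $f\ge\min_f(\gamma)$. I would instead argue as follows. If $\gamma$ enters $\operatorname{int}B'_i$ at all, then $\min_f(\gamma)\le\min_{B'_i}f$ when $m_i$ is a local minimum (giving lots of room), but when $m_i$ is a local maximum, $\gamma$ entering $\operatorname{int}B'_i$ only tells us $\gamma$ attains some value $>a_i$ there, which does not constrain $\min_f(\gamma)$. However, note that $\gamma$ must also \emph{cross} $\partial B_i$ to get in and out, so $\gamma$ already attains, at times $t_0$ and $t_1$, values on $\partial B_i$; hence $\min_f(\gamma)\le f(\gamma(t_0)),f(\gamma(t_1))$, and more importantly $\min_f(\gamma)\le\min_{t\in[t_0,t_1]}f(\gamma(t))$. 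The detour $\eta$ lives on $\partial B_i$, and I claim we may choose $\eta$ inside the \emph{path-component of $\gamma(t_0)$ in the superlevel set $\{f\ge c\}\cap\partial B_i$} where $c:=\min(\min_f(\gamma|_{[0,t_0]}),\min_f(\gamma|_{[t_1,1]}))$; to do this cleanly I would actually avoid $\partial B_i$ entirely and instead use a simpler topological fact: since $m_i$ is the only point of $\{m_1,\dots,m_k\}$ in $B_i$ and $n\ge 2$, the set $B_i\setminus\{m_i\}$ is path-connected, and by Proposition~\ref{p:path_connected} together with Lemma~\ref{l:larger_f} one can push any path off the tiny ball $B'_i$ while staying in a region where $f\ge c$. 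I think the slick route is: \emph{take $B'_i$ and $B_i$ small enough and define "avoiding" as not entering $\operatorname{int} B'_i$}; prove the claim by induction on the number $j\in\{3,\dots,k\}$ of balls entered, at each stage reusing the entry/exit crossing argument, and handle the local-maximum case by observing that on the annulus $B_i\setminus\operatorname{int}B'_i$ we may route $\eta$ through a near-boundary-of-$B'_i$ sphere where $f>a_i\ge f|_{\partial B_i}\ge$ — no. I will present the version where $\eta$ runs along $\partial B_i$, state that $f\ge \min_f(\gamma)$ there is \emph{not} claimed, but rather that $f|_\eta\ge a_i'':=\min_{\partial B_i}f$ is irrelevant and what matters is comparing with $\min_f(\gamma)$: concretely, $\gamma|_{[t_0,t_1]}$ was part of $\gamma$, so $\min_f(\gamma)\le\min_f(\gamma|_{[t_0,t_1]})\le f(\gamma(t))$ for \emph{some} $t$, in particular $\min_f(\gamma)\le f(\gamma(t_0))$. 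This only shows $\min_f(\gamma)$ is a lower bound for the endpoints of $\eta$, not for $\eta$ itself — hence the real content is choosing $\eta$ well, and that is the crux. I would therefore, in the final writeup, choose the radii so that the claim's inequality is automatic: pick the $B_i$ first but also record $\beta_i:=\min_{\partial B_i}f$ (local-max case), and only consider paths $\gamma$ with $\min_f(\gamma)$ near $s$; in the application, $s$ is large (it is $\ge$ the saddle value), so $\min_f(\gamma)>\beta_i$ fails in general — meaning the cleanest correct statement routes $\eta$ along the sphere $\partial B'_i$ instead, where $f>a_i$ in the local-max case, and we compare: the replaced piece $\gamma|_{[\text{entry},\text{exit}]}$ into $\operatorname{int}B'_i$ is replaced by an arc on $\partial B'_i$ where $f>a_i\ge a_i=\max_{\partial B_i}f\ge$ the value where $\gamma$ crossed $\partial B_i$, hence $\ge$ a value $\gamma$ already took; so $\min_f(\bar\gamma)\ge\min_f(\gamma)$ because every new value is $>a_i$ and $a_i$ is attained (or exceeded) by $\gamma$ on its crossings of $\partial B_i$, which lie between entry and exit, hence are $\ge\min_f(\gamma)$. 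That closes the argument, and that sign bookkeeping on $\partial B'_i$ versus $\partial B_i$ is the step to get right.
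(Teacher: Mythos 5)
Your proposal eventually converges to the paper's argument for the local-maximum case, but only after much back-and-forth, and you never actually carry out the local-minimum case. For a local maximum $m_i$: route the detour along $\partial B'_i$ where $f > a_i$, and observe that $\gamma$ must cross $\partial B_i$, where $f \leq a_i$, so $\min_f(\gamma) \leq a_i$; hence every value on the detour is $> a_i \geq \min_f(\gamma)$. This is precisely what the paper's homotopy (fixed outside $\interior B'_i$, after first moving $\gamma$ off $m_i$) accomplishes. Your excise-and-splice variant --- cut from first entry into $\interior B'_i$ to last exit and replace by an arc on $\partial B'_i$ --- is a fine way to implement it and even sidesteps the ``avoid $m_i$ first'' step, since the splice lives entirely on the sphere $\partial B'_i$; you are right that this uses path-connectedness of $\partial B'_i$, i.e.\ $n \geq 2$.

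The local-minimum case, however, you never write down, and the one claim you do make about it --- that $\gamma$ entering $\interior B'_i$ forces $\min_f(\gamma) \leq \min_{B'_i} f$ --- is false: the path need not come near the minimum value of $f$ on $B'_i$. The correct estimate, which the paper uses, is weaker but suffices: $\gamma$ crosses $\partial B'_i$, where $f < a_i$, so $\min_f(\gamma) < a_i$; and $f \geq a_i$ on $\partial B_i$; hence routing the detour on the \emph{outer} sphere $\partial B_i$ (so the new path avoids all of $\interior B_i$, not just $\interior B'_i$) only introduces values $\geq a_i > \min_f(\gamma)$. This inner-sphere-for-maxima, outer-sphere-for-minima asymmetry is exactly the ``sign bookkeeping'' you flag at the end but never carry out. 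Finally, both you and the paper rely implicitly on the pairwise disjointness of the $B_i$ to ensure that each local repair cannot create a new incursion into any other $B'_j$; this deserves to be said.
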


  \begin{proof}
  Assume that $\gamma$ enters a ball $B'_i$ for $i \in \{3, \dots, k\}$. Let us
    distinguish whether $m_i$ is a local maximum or minimum.

    First assume that $m_i$ is a local maximum. Then $\min_f(\gamma) \leq a_i$
    because $\gamma$ has to pass through $\partial B_i$. By a homotopy,
    fixed outside the interior of $B'_i$
    we can assume that $\gamma$ avoids $m_i$ (here we use
    $n \geq 2$); see, e.g., the proof of Proposition~1.14 in~\cite{hatcher02}
    how to perform this step.\footnote{We point out that the current online
    version of~\cite{hatcher02} contains a different proof of Proposition~1.14.
    Therefore, here we refer to the printed version of the book.} In addition, by further homotopy fixed outside the
    interior of $B'_i$ we can modify $\gamma$ so that it avoids the interior of
    $B'_i$ (the second homotopy pushes $\gamma$ in direction away from $m_i$).
    This does not affect $\min_f(\gamma)$ because $f(x) > a_i$ for every $x \in
    B'_i$. 

    Next let us assume that $m_i$ is a local minimum. Then $\min_f(\gamma) < a_i$
    because $\gamma$ has to pass through $\partial B'_i$ (this is not a
    symmetric argument when compared with the previous case). Modify $\gamma$
    by analogous homotopies as above; however, this time with respect to $B_i$
    (so that $\gamma$ completely avoids the interior of $B_i$). Because
    $\min_f(\gamma) < a_i$ and $f(x) \geq a_i$ for $x \in \partial B_i$, the
    minimum of $\gamma$ cannot decrease by these modifications.
    By performing these modifications for all $B'_i$ when necessary, we get the
    required $\bar \gamma$.
  \end{proof}

  
  Now, let us consider a diffeomorphism $\psi \colon S^n \setminus \{m_k\} \to
  \R^n$ given by the stereographic projection (in particular, it maps closed
  balls avoiding $m_k$ to closed balls). Let $g \colon \R^n
  \to \R$ be defined as $g := f\circ \psi^{-1}$. Let $n_i := \psi(m_i)$ for $i
  \in [k-1]$.  Once we find $v \in \R^n$, $v \neq n_1, \dots, n_{k-1}$ such
  that $Dg(v) = 0$, then $u := \psi^{-1}(v)$ is the required point with
  $Df(u)=0$.  Note that $n_1, n_2$ are still local maxima of $g$ and $n_3,
  \dots, n_{k-1}$ are local maxima or minima. We also set $D_i := \psi(B_i)$
  and $D'_i := \psi(B'_i)$ for $i \in [k-1]$ and $C_k := \psi(B_k \setminus
  \{m_k\})$, $C'_k := \psi(B'_k \setminus \{m_k\})$. The sets $D_i$ and $D'_i$
  are closed (metric) balls centered in $n_i$ whereas $C_k$ and $C'_k$ are
  complements of open (metric) balls in $\R^n$. Let $K$ be the compact set
  obtained from $\R^n$ by removing the interiors of $D'_1, \dots, D'_{k-1},
  C'_k$. Let us fix small enough $\eta > 0$ such that the closed
  $\eta$-neighborhoood $K_\eta$ of $K$ avoids $n_1, \dots, n_{k-1}$. We will
  also use the notation $K_{\eta/3}$ for the closed $\frac \eta3$-neighborhood
  of $K$. See Figure~\ref{f:Ks}.

\begin{figure}
  \begin{center}
    \includegraphics{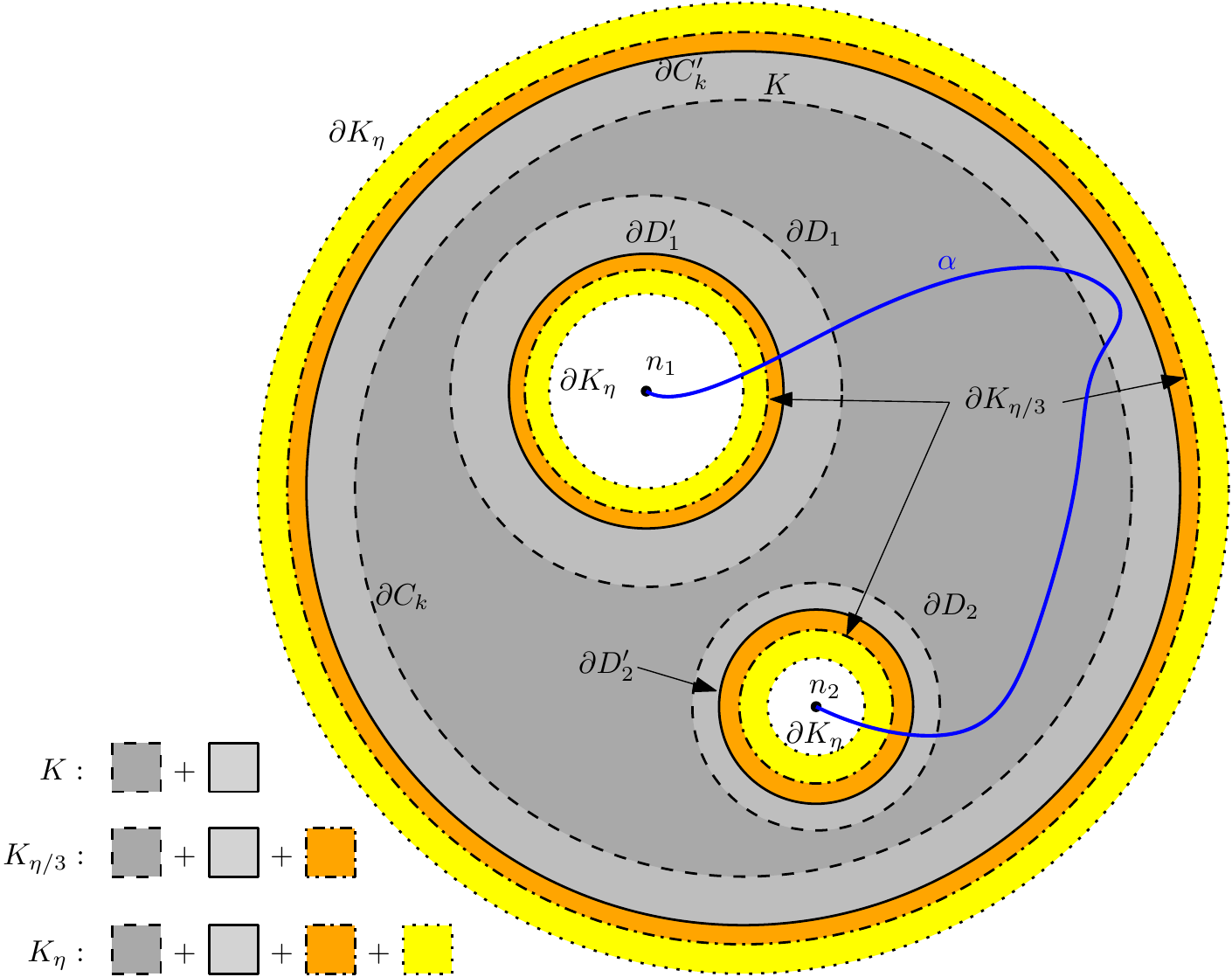}
    \caption{The sets $K$, $K_{\eta/3}$ and $K_\eta$ and some path $\alpha$
    connecting $n_1$ and $n_2$ of the form $\alpha = \psi \circ \gamma$ where
    $\gamma$ is avoiding. In the picture, $k = 3$.}
    \label{f:Ks}
  \end{center}
\end{figure}

  Assume, for contradiction, that $K_\eta$ does not contain $v$ with $Dg(v) =
  0$. Because $K_\eta$ is compact and $g$ is $C^1$, there is $\zeta > 0$ such
  that $\|Dg(w)\| \geq \zeta$ for every $w \in K_\eta$. 

  For every $w \in
K_{\eta/3}$ let $N(w)$ be the neighborhood given by Proposition~\ref{p:path_connected} (the neighborhood is
considered in the whole $\R^n$ not only in $K_{\eta/3}$). 
By possibly restricting $N(w)$ to smaller sets,
we can assume that each $N(w)$ is open and fits into a ball of radius $\frac23
\eta$. (In particular, if $w \in K_{\eta/3}$, then $N(w) \subseteq K_\eta$.)

\begin{claim}
\label{c:Lebesgue}
  There is $\varepsilon > 0$ such that for every $x \in K_{\eta/3}$ the metric ball
  $B(x,\varepsilon) \subseteq \R^n$ centered in $x$ of radius $\varepsilon$ fits into $N(w)$ for
  some $w \in K_{\eta/3}$.
\end{claim}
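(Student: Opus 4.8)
The plan is to recognize Claim~\ref{c:Lebesgue} as an instance of the Lebesgue number lemma. First I would observe that the family $\{N(w) \colon w \in K_{\eta/3}\}$ is an open cover of $K_{\eta/3}$: each $N(w)$ is open in $\R^n$ and contains $w$, so every point of $K_{\eta/3}$ lies in at least one of these sets. Moreover $K_{\eta/3}$ is compact, being a closed and bounded subset of $\R^n$ (the closed $\frac\eta3$-neighborhood of the compact set $K$). So the standard compactness argument applies.

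Concretely, for each $x \in K_{\eta/3}$ pick $w(x) \in K_{\eta/3}$ with $x \in N(w(x))$; since $N(w(x))$ is open in $\R^n$, there is $r_x > 0$ with $B(x, 2r_x) \subseteq N(w(x))$. The balls $\{B(x, r_x)\colon x \in K_{\eta/3}\}$ cover $K_{\eta/3}$, so by compactness there are finitely many points $x_1, \dots, x_\ell \in K_{\eta/3}$ with $K_{\eta/3} \subseteq \bigcup_{j=1}^\ell B(x_j, r_{x_j})$. I would then set $\varepsilon := \min\{r_{x_1}, \dots, r_{x_\ell}\} > 0$ and check it works: given $x \in K_{\eta/3}$, choose $j$ with $x \in B(x_j, r_{x_j})$; then every $y \in B(x,\varepsilon)$ satisfies $\|y - x_j\| \leq \|y-x\| + \|x - x_j\| < \varepsilon + r_{x_j} \leq 2r_{x_j}$, hence $B(x, \varepsilon) \subseteq B(x_j, 2r_{x_j}) \subseteq N(w(x_j))$, and $w := w(x_j) \in K_{\eta/3}$ is the desired center.

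There is essentially no obstacle here; the claim is a packaging step, and the only points deserving a moment's attention are that the sets $N(w)$ are open in all of $\R^n$ (so that enlarging $B(x,r_x)$ to $B(x,2r_x)$ inside $N(w(x))$ is legitimate) and that the centers $w(x_j)$ were drawn from $K_{\eta/3}$ — both immediate from the way the $N(w)$ were produced via Proposition~\ref{p:path_connected} and subsequently shrunk to fit into balls of radius $\frac23\eta$.
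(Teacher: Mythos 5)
Your proof is correct, and it takes a genuinely different route than the paper's. The paper applies the classical Lebesgue number lemma (citing Munkres, Lemma~27.5) to the larger compact set $K_\eta$ rather than to $K_{\eta/3}$: it extends $\{N(w)\colon w \in K_{\eta/3}\}$ to an open cover of $K_\eta$ by adjoining the relative interiors of $B'_i \cap (K_\eta \setminus K_{\eta/3})$ and $C'_k \cap (K_\eta \setminus K_{\eta/3})$, takes a Lebesgue number for this cover, and then observes that for $x \in K_{\eta/3}$ the ball $B(x,\varepsilon)$ cannot lie in any of the adjoined sets (they are disjoint from $K_{\eta/3}$), hence must lie in some $N(w)$. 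The reason for passing to $K_\eta$ is that the lemma, applied naively to the subspace $K_{\eta/3}$, would only control $B(x,\varepsilon) \cap K_{\eta/3}$, not the ambient ball. You sidestep that issue entirely: since each $N(w)$ is open in $\R^n$ (not merely relatively open), you can choose for every $x$ a concentric $\R^n$-ball $B(x,2r_x) \subseteq N(w(x))$, extract a finite subcover of $K_{\eta/3}$ by the half-radius balls, and take $\varepsilon$ to be the minimum of the chosen radii; the usual triangle-inequality step then delivers the full ambient ball directly. In effect you reprove the Lebesgue number lemma from scratch in the form needed, rather than cite it and patch the cover. Your version is more self-contained and also avoids a small point the paper leaves implicit, namely that $\varepsilon$ should be taken at most $\tfrac{2}{3}\eta$ so that $B(x,\varepsilon) \subseteq K_\eta$ before the relative conclusion can be upgraded to an ambient one. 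Both arguments are sound.
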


\begin{proof}
This is just a modification of the Lebesgue number lemma.
  Let us consider the
  open cover $\O$ of $K_\eta$ consisting of all sets $N(w)$ together with the
  relative interiors of the sets $B'_1 \cap (K_\eta \setminus K_{\eta/3}),
  \dots, B'_{k-1} \cap (K_\eta \setminus K_{\eta/3}),
  C'_k \cap (K_\eta \setminus K_{\eta/3})$ (all sets are relatively open in $K_\eta$). Note that the newly added sets are
  disjoint from $K_{\eta/3}$. Let $\varepsilon > 0$ be the standard Lebesgue number with
  respect to the cover $\O$, that is, for every $x \in K_\eta$, the ball
  $B(x,\varepsilon)$ fits into one of the sets of $\O$; see~\cite[Lemma~27.5]{munkres}. Then the required claim
  holds with this $\varepsilon$ because if $x \in K_{\eta/3}$, then $x$ does not belong to
  any of the newly added sets of $\O$.
\end{proof}

Let $\varepsilon$ be the value obtained from Claim~\ref{c:Lebesgue}. Because
some ball $B(x, \varepsilon)$ fits into some $N(w)$ which fits into a ball of
radius $\frac23 \eta$, we get $\varepsilon \leq \frac 23\eta$.

Let us consider a path $\gamma$ in $S^n$ such that
\begin{enumerate}[(s1)]
  \item $s - \min_f(\gamma) < a'_1 - a_1$;
  \item $s - \min_f(\gamma) < a'_2 - a_2$; and
  \item $s - \min_f(\gamma) < \frac{\zeta\varepsilon}4$.
\end{enumerate}
By Claim~\ref{c:avoiding}, we can assume that
$\gamma$ is avoiding.
We will start modifying $\gamma$ to $\gamma'$ with
$\min_f(\gamma') > s$, which will be the required contradiction. 
Let $\alpha :=
\psi \circ \gamma$; see the diagram at Figure~\ref{f:diag}. Then $\alpha$ connects $n_1$ and $n_2$, and $\alpha$ avoids
the interiors of $D'_3, \dots, D'_{k-1}$ and $C'_k$; see Figure~\ref{f:Ks}. 

\begin{figure}
\begin{center}
\begin{tikzcd}
 \left[0,1\right] \arrow[r,"\gamma"]\arrow[dr, "\alpha"'] & S^n\setminus\{m_k\} \subseteq S^n \arrow[r, "f"] \arrow[d,"\psi"] & \R \\
 & \R^n \arrow[ur, "g"']&
\end{tikzcd}
\end{center}
\caption{The maps $\alpha$, $\gamma$, $\psi$, $f$ and $g$. The two triangles
  are commutative.}
\label{f:diag}
\end{figure}
Because, $\alpha$ is a continuous function on the compact interval $[0,1]$, we
get, by the Heine-Cantor theorem, that $\alpha$ is uniformly continuous. In particular, there is $\delta > 0$
such that if $t_1, t_2 \in [0,1]$ with $|t_1 - t_2| \leq \delta$, then
$\|\alpha(t_1) - \alpha(t_2)\| \leq \frac{\varepsilon}3$. Let us consider a
positive integer $\ell > \frac 1\delta$. We will be modifying $\alpha$ in two
steps. First, we get $\alpha''$ such that $\alpha''(t) > s$ if $t =
\frac{j}\ell$ for some $j \in \{0, \dots, \ell\}$. Then we modify $\alpha''$
individually on the intervals $(\frac j \ell, \frac{j+1}\ell)$ for $j \in \{0,
\dots, \ell-1\}$ obtaining $\alpha'$ with $\min_g(\alpha') > s$. (Given a path $\beta \colon [0,1] \to \R^n$ connecting $n_1$
and $n_2$, we define $\min_g(\beta) := \min\{g(\beta(t))\colon t \in [0,1]\} =
\min_f(\psi^{-1}\circ \beta)$.)
The required $\gamma'$ will be obtained as $\psi^{-1} \circ \alpha'$. 

For the first step, let us first say that an interval $I_j = [\frac j \ell,
\frac{j+1}\ell]$ where $j \in \{0, \dots, \ell-1\}$ \emph{requires a
modification} if $g(\alpha(t)) \leq s$ for some $t \in I_j$. This in
particular
means that $\alpha(t) \in K$ for this $t$: Indeed, this follows from (s1) and
(s2). We already know that $\alpha$ avoids the interiors of $D'_3, \dots,
D'_{k-1}$ and $C'_k$. It remains to check that $\alpha(t)$ does not belong to
the interiors of $D'_1$ and $D'_2$ as well. Because $\alpha$ has to meet
$\partial D_1$ and $\partial D_2$, we get that $\min_f(\gamma) = \min_g(\alpha)
\leq a_1, a_2$ from the definition of $a_1$ and $a_2$. By (s1) and (s2), we get
$s < a'_1, a'_2$. Therefore, from the definition of $a'_1$ and $a'_2$, we get
that $\alpha(t)$ cannot belong neither to $D'_1$ nor to $D'_2$ as required.

By the uniform continuity, the fact that $g(\alpha(t)) \leq s$ for some $t \in
I_j$ implies that $\alpha(I_j)$ belongs to the closed
$\frac{\varepsilon}3$-neighborhood of $K$. In particular, $\alpha(I_j)$ belongs
to $K_{\eta/3}$ as $\varepsilon \leq \frac23\eta < \eta$.

Now, for each $I_j$ which requires a modification, consider the open
$\varepsilon$-ball $U_j \subseteq \R^n$ centered in $\alpha(\frac{2j+1}{2\ell})$.
(Note that, $\frac{2j+1}{2\ell}$ is the midpoint of $I_j$.) From the previous
considerations, the centre of each $U_j$ belongs to $K_{\eta/3}$ and the whole
$U_j$ is a subset of $K_\eta$.

Now we perform the first step. Consider $t = \frac j\ell$ for some $j \in
\{0,\dots, \ell\}$. If $g(\alpha(t)) > s$, then we do nothing. Note that this
includes the cases $j = 0$ or $j=\ell$. If $g(\alpha(t)) \leq
s$, then both intervals $I_{j-1}$ and $I_j$ require a modification. 
By the uniform continuity, the open ball $V_j
\subseteq \R^n$
centered in $\alpha(t)$ of radius $\frac{2\varepsilon}3$ is a subset of both
$U_{j-1}$ and $U_j$; see Figure~\ref{f:UV}. We observe that $V_j$ is a subset of $K_\eta$ as $V_j
\subseteq U_j$.
In particular, by the
definition of $\zeta$, we get that $\|Dg(w)\| \geq \zeta$ for every $w \in
V_j$. By Lemma~\ref{l:larger_f}, used on a closed ball of a slightly smaller
radius $\frac{\varepsilon}2$, there is a point $v$ in $V_j$ such that 
\[g(v) \geq
g(\alpha(t)) + \frac{\zeta\varepsilon}4 \geq {\textstyle \min_g(\alpha)} +
\frac{\zeta\varepsilon}4 = {\textstyle \min_f(\gamma)} +
\frac{\zeta\varepsilon}4.\]
Using (s3), we get $g(v) > s$. Now, by a homotopy, we modify $\alpha$ to
$\alpha''$ so that
it stays fixed outside the interval $(t - \frac1{4\ell}, t+\frac1{4\ell})$, the
modification of $\alpha$ occurs only in $V_j$ and $\alpha''(t) = v$; see
Figure~\ref{f:12step}.
We perform
these modifications simultaneously for every $t = \frac j\ell$ with
$g(\alpha(t)) \leq 
s$. This is possible as the intervals $[t - \frac1{4\ell}, t+\frac1{4\ell}]$ are
pairwise disjoint. This way, we obtain the required $\alpha''$.

\begin{figure}
 \begin{center}
   \includegraphics[page=1, scale=.8]{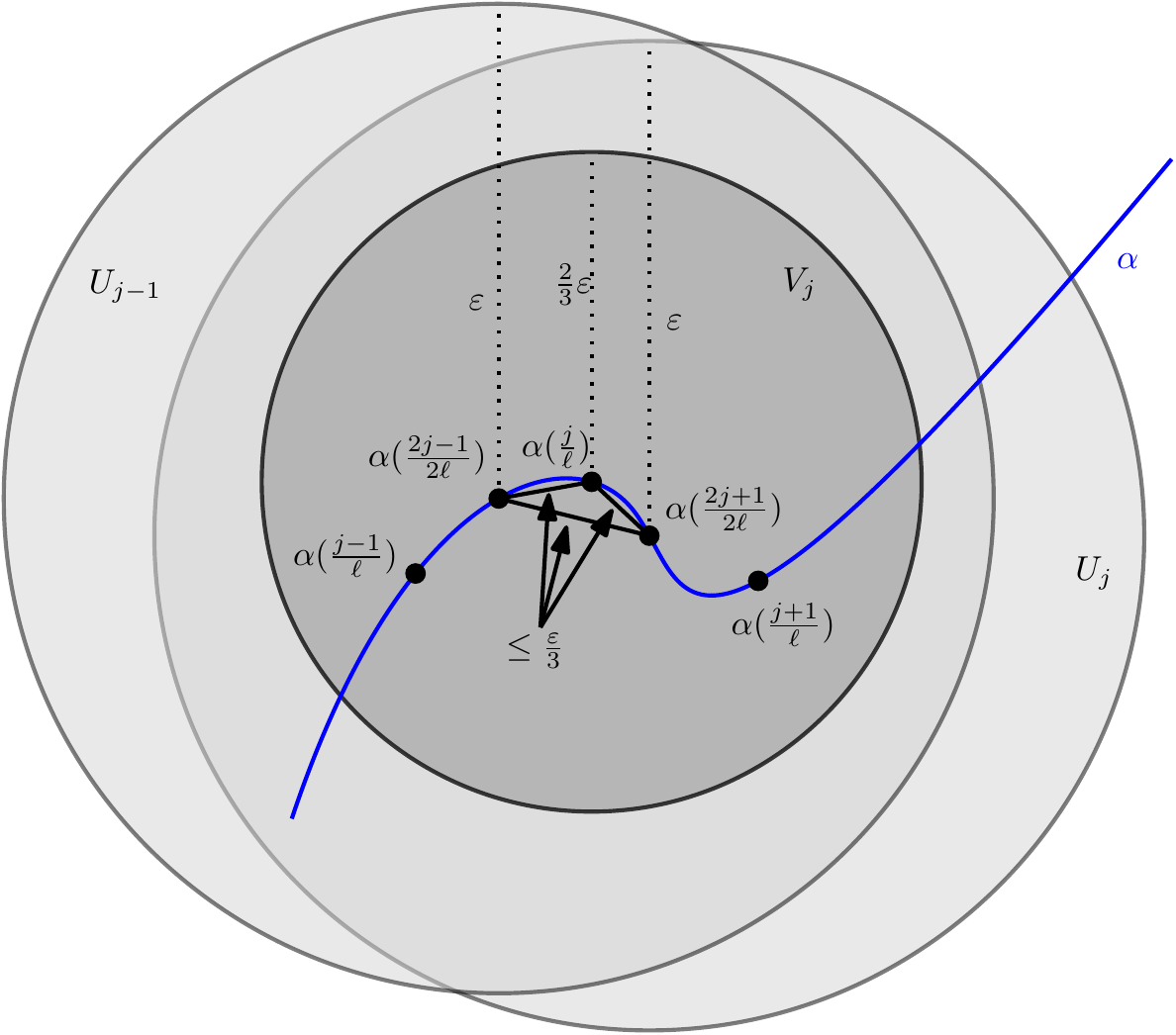}
   \caption{The sets $U_{j-1}$, $U_j$ and $V_j$ in the case that
   $g(\alpha(\frac j\ell)) \leq s$.}
   \label{f:UV}
 \end{center}
\end{figure}

\begin{figure}
 \begin{center}
   \includegraphics[page=2, scale=.8]{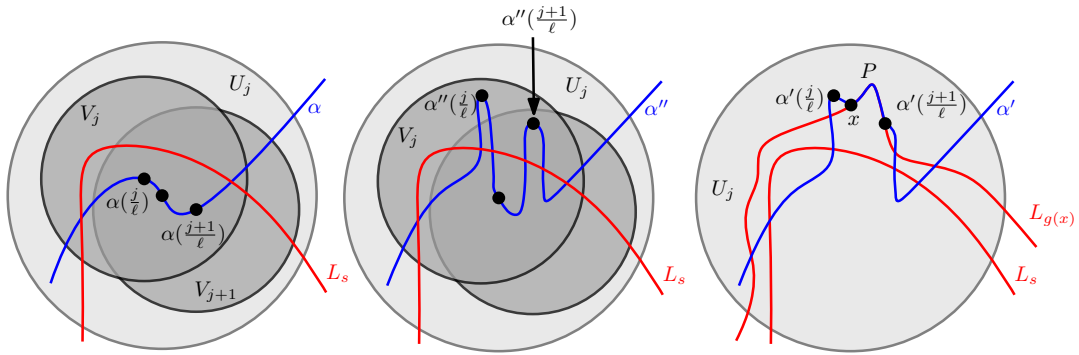}
   \caption{The first and the second step of modifications of $\alpha$ on an
   interval $I_j$ requiring a modification (the modification is shown only on
   this interval).}
   \label{f:12step}
 \end{center}
\end{figure}

Finally, we perform the second step of the modification. Let $I_j = [\frac
j\ell, \frac{j+1}\ell]$ be an
interval requiring a modification. We already know that
$g(\alpha''(\frac{j}{\ell})) > s$ and $g(\alpha''(\frac{j+1}{\ell})) > s$. In
addition, we know that both $\alpha''(\frac{j}{\ell})$ and
$\alpha''(\frac{j+1}{\ell})$ belong to $U_j$ as they belong to $V_j$ or
$V_{j+1}$. We set $\alpha'(\frac{j}{\ell}) := \alpha''(\frac{j}{\ell})$ and
$\alpha'(\frac{j+1}{\ell}) := \alpha''(\frac{j+1}{\ell})$. Next, we aim to
define $\alpha'$ on $(\frac{j}{\ell}, \frac{j+1}{\ell})$, which is the interior
of $I_j$, so that $\min(g(\alpha'(I_j))) > s$. By Claim~\ref{c:Lebesgue}, $U_j$
fits into some $N(w)$ for some $w \in K_{\eta/3}$. (Here we use that the center
of $U_j$ belongs to $K_{\eta/3}$.) Now, Proposition~\ref{p:path_connected} implies that
$\alpha'(\frac{j}{\ell})$ and $\alpha'(\frac{j+1}{\ell})$ may be connected by
a path $P\colon[0,1] \to \R^n$ such that $g(P(t)) > s$ for every $t \in [0,1]$:
Indeed, let us assume that, without loss of generality,
$g(\alpha'(\frac{j}{\ell})) \geq g(\alpha'(\frac{j+1}{\ell})) > s$. First, draw
$P$ as a straight line from $\alpha'(\frac{j}{\ell})$ towards $\alpha'(\frac{j+1}{\ell})$ until
we reach a (first) point $x \in U_j \subseteq N(w)$ with $g(x) =
g(\alpha'(\frac{j+1}{\ell}))$; of course, it may happen that $x =
\alpha'(\frac{j+1}{\ell})$. Then by Proposition~\ref{p:path_connected}, $x$ and
$\alpha'(\frac{j}{\ell})$ can be connected within the level set $L_{g(x)}$;
see Figure~\ref{f:12step}.
(This may mean that $P$ leaves $N(w)$, or even $K_{\eta}$, but this is not problem for the
argument.)
Altogether, we set $\alpha'$ on $I_j$ so that it follows the path $P$, and this
we do independently on each interval requiring a modification. Other intervals
remain unmodified.

From the construction, we get $\min_g(\alpha') > s$; therefore the path $\gamma'
:= \psi^{-1} \circ \alpha'$ satisfies $\min_f(\gamma') = \min_g(\alpha') > s$ which
contradicts the definition of $s$.
\end{proof}

\section{Depth-like functions with few critical points}
\label{s:conclusions}

\subparagraph*{Bipyramid over a triangle.} In $\R^3$, we have a candidate example
of a convex body, namely the regular bipyramid $B$ over an equilateral
triangle $T$, such that there are exactly four barycentric hyperplanes (with
respect to the barycenter of $B$, which coincides with the point of maximal
depth in this case).
On the one hand, this is not surprising, because this is $n + 1$
hyperplanes, where $n = 3$ is the dimension of the ambient space. On the other
hand, if this is true, then it answers negatively, in dimension $3$, a question 
from~\cite[A8]{croft_falconer_guy94}, whether $2^n - 1$ barycentric hyperplanes
always exist.

More concretely, we conjecture that the only barycentric hyperplanes are the
following: three planes perpendicular to $T$ which meet $T$ in lines realizing
the depth of $T$ (these would be the hyperplanes realizing the depth), and the
plane of $T$ (this is the one extra plane).  Unfortunately, in this case, it is
not so easy to analyze the depth function as in the case of $T \times I$.

\subparagraph*{A function with four critical points and many properties of the depth.}
Let us recall that the depth function $\delta \colon S^{n-1} \to [0,1]$ 
on a convex body satisfies the following properties: 

\begin{enumerate}[(i)]
 \item \label{e:i} $\delta(v) = 1 - \delta(-v)$;
 \item \label{e:ii} $0 \in \conv U$ where $U \subseteq S^{n-1}$ is the set of
   the points where $\delta$ attains the minimum (by Proposition~\ref{p:irbt});
 \item \label{e:iii} $|U| \geq 3$, (by Corollary~\ref{c:three});
 \item \label{e:iv} $\delta$ is $C^1$ (by Proposition~\ref{p:C1});
 \item \label{e:v} if $U$ is finite, then $\delta$ has at least one more pair of opposite critical points (by Proposition~\ref{p:one_more_point}
and by (\ref{e:i})).
\end{enumerate}

We will show that our argument in the proof of Theorem~\ref{t:four} is tight in
the sense that for $n \geq 3$ there exists a function $\delta' \colon S^{n-1} \to [0,1]$
satisfying (\ref{e:i})--(\ref{e:v}) with equalities in (\ref{e:iii}) and (\ref{e:v}). 
%

In order to define $\delta'$, it will be much more convenient to reparametrize
$\delta'$. Thus, we will exhibit $\delta'' \colon S^{n-1} \to [-1, 1]$ which
satisfies (\ref{e:ii})--(\ref{e:v}) with equalities in (\ref{e:iii}) and (\ref{e:v}) but $\delta''(v) =
-\delta''(-v)$ instead of (\ref{e:i}). Then the required $\delta'$ is obtained as
$\frac12 \delta'' + \frac 12$.

This time we decompose $\R^{n}$ as $\R^{n-2}
\times \R^2$ and for a point $x \in \R^{n}$ we write $x = (y; z)$ where $y =
(y_1, \dots, y_{n-2}) \in
\R^{n-2}$ and $z = (z_1, z_2) \in \R^2$. The idea is to define $\delta''$
separately on the sphere $S^{n-3} \times \{0\}$ so that there is only one pair
of opposite critical points here (this will be the extra pair from (\ref{e:v})),
separately on the sphere $\{0\} \times S^1$ so that there are three pairs of
critical points (these will be three global minima and three global maxima from
(\ref{e:iii})), and then merge the two constructions so that the resulting function is
smooth and no new critical points arise. Unfortunately, the details are
somewhat tedious.

We actually define $\delta''$ on $\R^{n} \setminus
\{0\}$ considering $S^{n-1}$ as a subset of $\R^{n} \setminus
\{0\}$. 
Now, we set
\begin{equation}
\label{e:delta''first}
  \delta''(y,z) = \frac1{10}(2\|y\| - \|y\|^3)y_1 + \frac12\|z\|(z_1^3 - z_1z_2^2 - 2z_1z_2^2).
\end{equation}
We remark that the expression $(z_1^3 - z_1z_2^2 - 2z_1z_2^2)$ is nothing else
then the real part $\Re(z^3)$, where $z = (z_1, z_2)$ is identified with the
complex number $z_1 + iz_2$. From~\eqref{e:delta''first} we easily see that
$\delta''$ is smooth on  $\R^{n} \setminus \{0\}$, therefore its restriction to $S^{n-1}$ is smooth as well as
the inclusion $S^{n-1} \subseteq \R^{n} \setminus \{0\}$ is a smooth embedding. 
We also easily check that $\delta''(y,z) = - \delta''(-y, -z)$.

From now on, let us assume that $(y,z) \in S^{n-1}$, that is $\|y\|^2 + \|z\|^2 = 1$.
If $y \neq 0$, we get
\begin{equation}
\label{e:delta''y}
  (2\|y\| - \|y\|^3)y_1 = (1 - (1 - \|y\|^2)^2)\frac{y_1}{\|y\|} =
  (1-\|z\|^4)\frac{y_1}{\|y\|},
\end{equation}
and if $z \neq 0$, we get (in complex numbers)
\begin{equation}
\label{e:delta''z}
  \|z\|(z_1^3 - z_1z_2^2 - 2z_1z_2^2) = \|z\|^4 \frac{\Re (z^3)}{\|z\|^3} =
  \|z\|^4 \Re((z/\|z\|)^3).
\end{equation}
Altogether~\eqref{e:delta''first}, \eqref{e:delta''y} and~\eqref{e:delta''z}
give
\begin{equation}
\label{e:delta''second}
  \delta''(y,z) = 
  \begin{cases}
    (1-\|z\|^4)\frac1{10}\frac{y_1}{\|y\|} + \|z\|^4 \frac12\Re((z/\|z\|)^3)
 \hbox{ if } y,z \neq 0,\\
     \frac12\Re(z^3) 
 \hbox{ if } y = 0,\\
    \frac1{10}y_1
\hbox{ if } z = 0.\\   
  \end{cases}
\end{equation}

In particular,~\eqref{e:delta''second} implies that for $y,z \neq 0$, $\delta''(y,z)$ is a convex combination of $\frac1{10}
\frac{y_1}{\|y\|}$ and $\frac12\Re((z/\|z\|)^3)$, which attain values in
$[-1/10, 1/10]$ and $[-1/2,1/2]$ respectively.  Therefore
$\delta''(S^{n-1}) \subseteq [-1/2,1/2]$.

Now we check that $\delta''$ attains exactly three global minima on $S^{n-1}$.
We observe that $\delta''(0,e^{i(2k+1)/3)\pi}) = -1/2$ for $k = 0,1,2$
by~\eqref{e:delta''second}. Therefore $\delta''$ attains the
minimum at these three points. On the other hand, we realize that these are the
only three points where $\delta''(y,z) = -1/2$. Indeed, if $y = 0$, then
$\delta''(y,z) = -1/2$ only if $\Re(z^3) = -1$, which occurs only if
$z = e^{i(2k+1)\pi/3}$ for $k = 0,1,2$. If $z = 0$, then $\delta''(y,z) \geq
-1/10$ by~\eqref{e:delta''second}. Finally, if $y,z \neq
0$, then the convex combination from~\eqref{e:delta''second} 
has the strictly
positive coefficient $(1- \|z\|^4)$ at $\frac1{10} \frac{y_1}{\|y\|}$, which
implies $\delta''(y,z) > \frac12\Re((z/\|z\|)^3) \geq -1/2$. This
characterization of global minima also gives property (\ref{e:ii}).

It remains to check that there is exactly one extra pair of opposite critical
points of $\delta''$. This could be done via Lagrange multipliers but the
computations seem to be slightly tedious, thus we provide a different argument.
In advance, we announce that these extra critical points
will be $(e_1, 0)$ and $(-e_1,0)$, where $e_1 \in S^{n-3} \subseteq \R^{n-2}$ is the first coordinate
vector $e_1 = (1,0,\dots, 0)$. We will rule out all other options, thus
these points have to be indeed critical by Proposition~\ref{p:one_more_point}.

Let $(y,z) \in S^{n-1}$ be a critical point. If $y = 0$, then $\delta''(0, z) =
\frac 12\Re(z^3)$ by~\eqref{e:delta''second} when restricted to $\{0\} \times
S^1 \subseteq S^{n-1}$ (where $0 \in \R^{n-2}$ in this case). Therefore $(0,
z)$ has to be critical point of the restriction as well. It is easy to analyse
that the only critical points of $\frac 12\Re(z^3)$ are of the form $z =
e^{ik\pi/3}$ where $k \in \{0,\dots, 5\}$, which are the minima and the maxima
opposite to the minima. If $z = 0$, then $\delta''(y,0) = \frac{1}{10}y_1$
when restricted to $S^{n-3} \times \{0\} \subseteq S^{n-1}$ (where $0 \in
\R^2$ in this case). Again, it is easy to analyze that $e_1$ and $-e_1$ are the
only critical points. (Here they are the maximum and minimum in the restriction
respectively, but they are not even local extremes on whole $S^{n-1}$.)
Finally, we consider the case $y, z \neq 0$. First, we fix $y$ and let $z$ vary
subject to $\|y\|^2 + \|z\|^2 = 1$, which implies that $\|z\|$ is fixed as well. Then $\delta''(y, z) =  a_y + b_y\Re((z/\|z\|)^3)$ by~\eqref{e:delta''second} where
$a_y$ and $b_y$ are constants depending on $y$. This implies that if $(y,z)$ is
critical, then $z/\|z\| = e^{ik\pi/3}$ where $k \in \{0,\dots, 5\}$. Next, we fix
$z$ and let $y$ vary. By a similar idea as above, we deduce that if $(y,z)$ is
critical, then $y = (\pm y_1, 0,\dots,0)$. Finally, let us fix both $y$ and $z$
and consider the
$2$-plane $\rho(y,z)$ given by all vectors $(t_y y, t_z z)$ for $t_y, t_z \in
\R$. This $2$-plane meets $S^{n-1}$ in a circle. For $(t_y y, t_z z)$ in
$\rho(y,z) \cap S^{n-1}$ the equation~\eqref{e:delta''second} gives
\begin{align*}
    \delta''(t_y, t_z) &= (1 - t_z^4 \|z\|^{4})\frac{1}{10}\frac{y_1}{\|y\|} +
  t_z^4\|z\|\frac12 \Re((z/\|z\|)^3)\\ 
  &= \frac{1}{10}\frac{y_1}{\|y\|} + t_z^4
  \|z\|^{4}\left(\frac 12 \Re((z/\|z\|)^3) -
  \frac{1}{10}\frac{y_1}{\|y\|}\right).
\end{align*}
Therefore $(t_y y, t_z z)$, for $t_y, t_z \neq 0$, may be the critical point of
$\delta''$ only if 
\begin{equation}
\label{e:z_not_y}
  \frac 12 \Re((z/\|z\|)^3) = \frac{1}{10}\frac{y_1}{\|y\|}
\end{equation}
 which is independent of the values $t_y$ and $t_z$. If we recall the previous
 two conditions on the critical point $(y,z)$, we get $\Re((z/\|z\|)^3) = \pm
 1$ and $\frac{y_1}{\|y\|} = \pm1$, therefore~\eqref{e:z_not_y} may not hold
 simultaneously. This finishes the analysis of the critical points of
 $\delta''$.

\iffull
\section*{Acknowledgements} We thank Stanislav Nagy for introducing us to Gr\"{u}nbaum's questions,
for useful discussions on the topic, for providing us with many references, and
for comments on a preliminary version of this paper. We thank Jan Kyn\v{c}l and
Pavel Valtr for letting us know about a more general counterexample they found.
We thank Roman Karasev for providing us with
references~\cite{karasev11,blagojevic-karasev16} and for comments on a
preliminary version of this paper. Finally, we thank an anonymous referee for
many comments on a preliminary version of the paper which, in particular,
yielded an important correction in Section~\ref{s:one_more_point}.
\fi



\bibliography{depth}

\end{document}